\numberwithin{equation}{section}
\newtheorem{theorem}{Theorem}[section]
\newtheorem*{theorem*}{Theorem}
\newtheorem{proposition}{Proposition}[section]
\newtheorem{remark}{Remark}[section]
\newtheorem{lemma}{Lemma}[section]
\newtheorem{corollary}{Corollary}[section]
\newtheorem{porism}{Porism}[section]
\title{Joint evolution of a\\ Lorentz-covariant massless scalar field\\ and its point-charge source\\ in one space dimension}
\author{Lawrence Frolov$^*$, Samuel Leigh$^\dagger$, and Shadi Tahvildar-Zadeh$^*$\\[10pt]
{\small $^*$ Department of Mathematics, Rutgers University (New Brunswick)}\\
{\small $^\dagger$ Department of Physics, University of Washington}
}
\date{May 2024}
\begin{document}

\maketitle
\begin{abstract}
    In this paper we prove that the static solution of the Cauchy problem for a massless real scalar field that is sourced by a point charge in $1+1$ dimensions is asymptotically stable under perturbation by compactly-supported radiation.  This behavior is due to the process of back-reaction. Taking the approach of Kiessling, we rigorously derive the expression for the force on the  particle from the principle of total energy-momentum conservation. We provide a simple, closed form  for the   particle's self-action, and show that it is restorative   in this model , i.e. proportional to negative velocity, and causes the charge to return to rest after the radiation passes through. We establish these results by studying the joint evolution problem for the particle-scalar field system, and proving its global well-posedness and the claimed asymptotic behavior.
\end{abstract}
\newpage
\section{Introduction and Statement of Main Results}

\subsection{Background} Consider the dynamics of a vibrating point charge and the electromagnetic field it is sourcing. As the particle oscillates it radiates electromagnetic waves which propagate away from the charge at the speed of light. These waves carry both energy and momentum, so to conserve the total energy-momentum of the system, the particle must undergo some dampening through an interaction with its own field. This dampening self-interaction is one example of back-reaction, the process in which charge distributions source fields which then ``re-act" on the charges. 

As it is well-known, attempting to study the process of back-reaction in the framework of Maxwell-Lorentz electrodynamics leads to a fundamental inconsistency. The Lorentz force needed to calculate the path of the particle requires us to evaluate the electromagnetic field along the particle's path, and yet the field sourced by the particle is undefined precisely on this path! Resolving this inconsistency has been an open problem for more than a century, and has been worked on by many notable figures such as Abraham \cite{Abraham}, Poincaré \cite{Poincare} and Dirac \cite{Dirac}. A gripping account of this endeavor can be found in \cite{spohn_2004}, which also includes an excellent review of the main approach mathematicians have taken to successfully resolve the inconsistency, namely smearing the point charge into a smooth charge distribution.  In this approach however, it is not possible to take the smearing away, once it is introduced. The introduction of smearing also complicates the task of keeping the system of equations fully Lorentz-covariant, leading to many of the results obtained thus far being restricted to non-relativistic motions of the particle. (See also \cite{Ko20}, Section 2.3, for more recent results in this direction.)

Previous techniques of studying these field-particle systems directly, either without smearing or with smearing that is put in and then taken away, has left something to be desired. Outlined in a recent review \cite{Poisson}, they include an infinite bare-mass renormalization that is mathematically ill-defined, or an ad-hoc averaging of the fields in a neighborhood of the charge. A breakthrough occurred in 2019 when, following up on the work of Poincaré \cite{Poincare}, Kiessling \cite{Kiessling, KiesslingE} showed that postulating energy-momentum conservation of the field-particle system yields a unique and admissible force law, provided that the field's momentum density is locally integrable around the particle. Although this integrability assumption rules out the classical vacuum law of Maxwell, given by $E=D$, $B=H$, it admits others, such as the Bopp-Landé-Thomas-Podolsky (BLTP) vacuum law \cite{Bopp1, Bopp2,Lande,Thomas,Podolsky}.

In three dimensions, the Maxwell electromagnetic field sourced by a point charge is not only too singular to evaluate along the charge's path, but also its energy and momentum densities are not locally integrable around the source. By contrast BLTP, which is a higher order modification of Maxwell, does have the regularity necessary to derive a unique force law from the assumption of energy-momentum conservation. In the context of BLTP, Kiessling and Tahvildar-Zadeh \cite{KTZ} have successfully applied this force law to prove local well-posedness of the joint field-particle dynamics, and Hoang {\em et al.} \cite{Hoang} proved global existence for the scattering problem of a single charge interacting with a smooth potential. However, the complex form of the BLTP energy-momentum conserving force law has so far resisted a clear analysis of the asymptotic behavior of the particle. 

This paper studies the dynamics of a relativistic point charge coupled to a massless scalar field on flat $1+1$ dimensional space-time. Working in one space dimension provides us with the regularity needed to derive the energy-momentum conserving force law without making any higher order modifications to the theory, thus allowing a much simpler analysis to be performed. We choose to study scalar charges because electromagnetism is not viable in one space dimension. Our model closely resembles the one studied in \cite{SEA}, with the exception that our dynamics are fully relativistic. To isolate the effects of the interaction between the scalar charge and its own field, we will be focusing on the case of a single particle perturbed by
scalar radiation.
\subsection{Main Results}

Taking Kiessling's approach, we show that the 2-force which acts on a single charged particle at $z=(z^0,z^1)\in \mathbb{R}^{1,1}$ is given by
\begin{equation}\label{Intro force law}
    F^\mu(z^0,z^1)=-\bigg[n_\nu T^{\mu \nu}_S (z^0,x^1)\bigg]_{x^1=z^1},
\end{equation}
where $n_\mu$ is the unit covector that is annihilated by the particle's two-velocity, $T_S^{\mu \nu}$ is the energy-momentum tensor of the field, and $[\cdot]_{x^1=z^1}$ denotes the jump in space at $z^1$. The force law given by equation (\ref{Intro force law}) is derived from the principle of energy-momentum conservation
\begin{equation}
    \partial_\nu T_p^{\mu \nu}+\partial_\nu T_S^{\mu \nu}=0,
\end{equation}
where $T_p^{\mu \nu}$ is the energy-momentum tensor of the particle, which is concentrated on the particle's world-line, and the derivatives are taken weakly, to account for singularities in the two energy tensors. 

\par
Guided by Weyl's ``{\em agens} theory" of matter \cite{Weyl}, according to which the world-lines of matter particles are simply the locus of singularities of the underlying spacetime and/or the fields defined on that spacetime, we are led to the study of  a joint evolution problem in which the path of the particle $z(\tau)$ appears as a jump discontinuity in the derivatives of the scalar field $U$, with the jumps showing up inside the force term in the equation for $\ddot{z}$. The equation of motion for a scalar field with a point charge source is
\begin{equation}\label{Intro field equation}
    \eta^{\mu \nu}\partial_\mu \partial_\nu U(x)=a \int \delta^{(2)}(x-z(\tau))d\tau 
\end{equation}
where $\tau$ and $a$ are the particle's proper time and scalar charge respectively. We consider a stationary field-particle system which is perturbed by some incoming scalar ``radiation." The joint evolution problem corresponding to this is given by the initial value problem for the unknown field $U = U(x)$ and unknown trajectory $z = (z^\mu(\tau))$ satisfying
\begin{equation}\label{Intro Scalar IVP}
\left\{\begin{array}{rcl}
\eta^{\mu \nu}\partial_\mu \partial_\nu U(x) &=& a\int \delta^{(2)}(x-z(\tau))d\tau 
\\
U(0,x^1)&=&-\frac{a}{2}|x^1|+V_0(x^1)
\\
\partial_0 U(0,x^1)&=&V_1(x^1),
\end{array}\right.
\end{equation}
\begin{equation}\label{Intro Charge IVP}
\begin{cases}
\frac{dz^\mu}{d\tau}(\tau)=:u^\mu(\tau)
\\
\frac{dp^\mu}{d\tau}(\tau)=F^\mu(z(\tau)),\qquad p^\mu(\tau) :=(\tilde m-aU(z(\tau)))u^\mu(\tau).
\end{cases}
\end{equation}
where $\tilde m$ is the bare mass of the particle and $F$ is as in \eqref{Intro force law}.  We work in the fixed Lorentz frame where the particle remained stationary at the origin for all time $x^0<0$. Thus the data for \eqref{Intro Charge IVP} is $z(\tau) = (0,0)$ and $p(\tau) = (\tilde m, 0)$ for all $\tau\leq0$. The motion of the charge is perturbed by incoming radiation, which is represented by $V_0$, $V_1$ in the initial data for $U$.
The following is an informal statement of the first main result for this paper (for the precise statement, see Theorem \ref{Global}):
\begin{theorem*}\label{Intro Global}
For any set of particle parameters with positive bare mass and non-zero real scalar charge, and for any set of small, smooth, compactly supported  functions $V_0(x^1)$, $V_1(x^1)$, the joint initial value problem given by (\ref{Intro Scalar IVP}) and (\ref{Intro Charge IVP}) admits a unique, global-in-time solution.
\end{theorem*}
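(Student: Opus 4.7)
My plan is to eliminate the field entirely and reduce the coupled system to an ODE for the trajectory. In characteristic coordinates $u = x^0 - x^1$, $v = x^0 + x^1$, d'Alembert's formula produces an explicit representation of $U$ in terms of the initial data $V_0$, $V_1$, the static kink $-\frac{a}{2}|x^1|$, and the unknown trajectory $Z(x^0) := z^1(\tau(x^0))$. Away from the worldline, $U$ is a superposition $\phi_-(u) + \phi_+(v)$ of right- and left-moving waves; across the worldline, continuity of $U$ together with the jump relation $[\partial_1 U] = -a\sqrt{1-\dot Z^2}$, obtained by integrating the wave equation in $x^1$ across $x^1 = Z(x^0)$, determines the two pieces from one another. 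Substituting this representation into the force law \eqref{Intro force law} eliminates $U$ from the particle equation. Because a subluminal emitter in flat $1+1$ spacetime never receives its own outgoing signal, the resulting self-force is local in $\tau$; the only genuine external forcing is the compactly supported perturbation evaluated at the two points where the past light cone of $z(\tau)$ meets the initial slice.

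Given the reduced ODE, the second step is local existence and uniqueness by Picard--Lindel\"of. On the open subset of phase space defined by $u^0 > 0$, $(u^0)^2 - (u^1)^2 = 1$, and $|a U(z)| < \tilde m$, the right-hand side is locally Lipschitz: the advertised closed-form self-interaction is smooth in $u^\mu$ (proportional to $-u^1$), the effective inertia $\tilde m - aU(z)$ stays strictly positive so $\dot p^\mu = F^\mu$ can be solved for $\dot u^\mu$, and the external piece depends smoothly on $\tau$ through the smooth data $V_0, V_1$. Standard ODE theory then delivers a unique solution on some maximal interval $[0, T_*)$.

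To show $T_* = \infty$ I would invoke the conservation law from which the force law itself was derived. Formally $p^\mu + \int T^{\mu 0}_S\, dx^1$ is $\tau$-independent; after subtracting the time-independent, spatially non-integrable static background, the remainder is finite, positive, and of size at most $\tilde m + O(\|V_0\|_{H^1}^2 + \|V_1\|_{L^2}^2)$. This yields a uniform upper bound on $p^0 = (\tilde m - a U(z))u^0$, which, combined with the lower bound on $\tilde m - aU(z)$, forces $u^0$ to stay bounded and hence $|\dot Z|$ strictly below $1$. The restorative character of the self-force further drives $u^1$ back to zero after the radiation has passed, so $|U(z)|$ remains small and the inertia positivity is preserved throughout. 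Consequently none of the degeneracies that could halt the Picard argument are ever approached, and the local solution extends to all $\tau \in \mathbb{R}$.

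The main obstacle I anticipate is a clean low-regularity analysis at the worldline. The static profile is only Lipschitz, so $\partial U$ carries jumps that propagate along the initial characteristics and strike the trajectory, and the quadratic structure of $T^{\mu\nu}_S$ makes the jump $[n_\nu T^{\mu\nu}_S]_{x^1 = z^1}$ sensitive to how these one-sided limits are combined. Rigorously establishing that the one-sided limits of $\partial U$ along $x^1 = Z(x^0)$ exist, vary continuously in $\tau$, and collapse into the simple closed form proportional to $-u^1$ is the technical heart of the result; once that reduction is in hand, the Picard-plus-energy scheme above is largely routine.
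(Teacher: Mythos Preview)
Your reduction of the field to an explicit formula and then to an ODE for the trajectory is exactly what the paper does (it writes $U=V+U_{\text{stat}}+U_{\text{source}}$, uses d'Alembert/Duhamel, and arrives at $\frac{dp^1}{d\tau}=-\frac{a^2}{2}u^1+a\partial_1V$). The local Picard step is fine. The genuine gap is in your continuation argument.

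First, the claim that $|U(z)|$ stays small is simply false. Along any subluminal trajectory, $U_{\text{stat}}(t,z^1(t))=-\frac{a}{2}t$, so $aU(z)\to-\infty$ and the dynamical mass $m=\tilde m-aU(z)$ grows linearly. Your phase-space condition $|aU(z)|<\tilde m$ is therefore violated almost immediately; what you actually need (and what holds) is the one-sided bound $aU(z)<\tilde m$. The paper obtains the positive lower bound $m\ge m_V:=\tilde m-|a|(\|V_0\|_{L^\infty}+\tfrac12\|V_1\|_{L^1})$ not from any smallness of $U(z)$ but from the explicit cancellation
\[
\frac{a^2}{2}t\;-\;aU_{\text{source}}(t,z^1(t))\;=\;\frac{a^2}{2}\int_0^t\bigl(1-\tfrac{1}{u^0}\bigr)\,ds\;\ge\;0,
\]
which you have not identified.

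Second, even granting the mass lower bound, your energy argument does not give a uniform bound on $p^0$. After subtracting the static background, the quantity $\int\bigl(T^{00}_S[U]-T^{00}_S[U_0]\bigr)\,dx^1$ can be as negative as $-\frac{a^2}{4}t$ (inside the forward light cone of the origin the subtrahend is $\frac{a^2}{8}$ over a set of length $2t$), so conservation only yields $p^0\le C(1+t)$, consistent with $p^0=mu^0$ where $m$ grows linearly. A uniform bound on $u^0$ therefore does not follow from energy considerations alone. The paper avoids this entirely: it observes that the right-hand side of the reduced system is \emph{pointwise} bounded and uniformly Lipschitz in the unknowns (the uniformity coming precisely from $m\ge m_V>0$), and then runs a single global contraction in the Bielecki norm $\|q\|_\gamma=\sup_{t\ge0}e^{-\gamma t}|q(t)|$ on a space of Lipschitz curves. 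That replaces your local-existence-plus-continuation scheme and bypasses the energy estimate altogether.
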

To prove this, we explicitly compute the forces in equation (\ref{Intro force law}), and from that we extract a closed form for the force that scalar point charges exert   on themselves.
We show that the scalar self-force resulting from this back-reaction is proportional  to the negative of  the particle's velocity, see equation (\ref{Joint 1}). 
 
\begin{remark}\label{Cancellation}
    We were able to determine this self-force because scalar fields sourced by point charges are regular enough in one space dimension for radiation to be emitted at a finite rate. However, the interaction between scalar fields and their charges generate mass (see (\ref{dyn mass})), and we show  that there is cancellation between said self-force and the particle's loss in mass  (see (\ref{product rule})).  Hence, there is a sense in which the {\em radiation}-reaction in this model does not contribute to the proper acceleration of the point charge, only the {\em back}-reaction on the particle of the static field in its own past does.  
\end{remark}
 
The following is an informal statement of the second main result for this paper (for the precise statement, see Theorem \ref{Rest}):
\begin{theorem*}
Let $z$, $U$ satisfy the joint IVP given by (\ref{Intro Scalar IVP}--\ref{Intro Charge IVP}) for a single scalar particle with positive bare mass and non-zero real scalar charge, and $V_0$, $V_1$ small, smooth, and compactly supported. Then $\lim_{x^0 \to \infty}u^1(x^0)=0$.
\end{theorem*}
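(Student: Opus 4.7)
The plan is to combine finite speed of propagation for the $1+1$ dimensional wave equation with the dissipative structure of the self-force identified in Remark~\ref{Cancellation}. The overall strategy is first to establish that the external perturbation eventually escapes a neighborhood of the particle's worldline, then to show that the remaining self-interaction drives $u^1$ to zero.

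The first step is to derive a priori bounds: using the conservation law $\partial_\nu(T_p^{\mu\nu} + T_S^{\mu\nu}) = 0$ together with the smallness of the initial data $V_0, V_1$, I would obtain uniform-in-$\tau$ bounds of the form $|u^1(\tau)| \leq 1 - \delta$ and $0 < m_- \leq m(\tau) \leq m_+$, where $m(\tau) := \tilde m - aU(z(\tau))$ is the dynamical mass. The speed bound combined with finite propagation speed for the source-free wave equation then yields a coordinate time $T^*$ after which the compactly-supported free-wave part of the perturbation has propagated past the particle. For $x^0 > T^*$, the field in a neighborhood of the worldline is determined entirely by the particle's own history: the initial static field $-\tfrac{a}{2}|x^1|$ together with the retarded contribution of the motion on $[0, T^*]$.

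Next I would use the explicit form of the force law (\ref{Intro force law}) together with the cancellation noted in Remark~\ref{Cancellation} to rewrite the charge equation (\ref{Intro Charge IVP}) for proper time $\tau$ past the corresponding $\tau^*$ in the dissipative form
\begin{equation*}
m(\tau)\,\dot u^1(\tau) = -\Lambda(\tau)\,u^1(\tau) + R(\tau),
\end{equation*}
where $\Lambda(\tau) \geq \lambda_0 > 0$ is the strictly restorative self-force coefficient and $R(\tau)$ is a memory term encoding the back-reaction of the particle's own earlier emissions. The one-sidedness of characteristic propagation in $1+1$ dimensions is essential here: waves emitted by the particle travel outward and never return, so $R$ draws only on the finite stretch $[0,\tau^*]$ of the motion and its influence becomes quantitatively controllable as $\tau$ grows.

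To close the argument I would run a Lyapunov/energy estimate: conservation of total energy-momentum implies the energy radiated to infinity is bounded, which together with the dissipative ODE above forces $\int_0^\infty \Lambda(\tau)(u^1(\tau))^2\,d\tau < \infty$. The a priori bounds of the first step give uniform continuity of $u^1$ in $\tau$, so this integrability upgrades to $u^1(\tau)\to 0$; translating back to the lab frame via the bound $u^0 \geq 1$ then gives $\lim_{x^0\to\infty} u^1(x^0) = 0$. The main obstacle I anticipate is the third step: rigorously showing that the memory term $R$ cannot conspire to defeat the dissipation, and in particular extracting the correct sign and a strictly positive lower bound for $\Lambda(\tau)$ from the jump of $T_S^{\mu\nu}$ across the current position of the particle. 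Everything downstream is essentially an ODE argument; the subtle work is in the field-theoretic bookkeeping that produces the dissipative ODE in the first place.
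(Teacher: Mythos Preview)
Your overall architecture is sound---show the external radiation eventually escapes the particle, then let the self-force drive $u^1$ to zero---but the a priori bounds you propose in the first step are where the real difficulty lies, and your plan there has a gap.

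The bound $m(\tau)\le m_+$ is not available: along the worldline one computes $m(x^0)=\tilde m - aV(z) + \tfrac{a^2}{2}x^0 - \tfrac{a^2}{2}\int_0^{x^0}\sqrt{1-v^2}\,dt$, so $m$ can grow linearly whenever the particle spends appreciable time near light speed. More importantly, the speed bound you want (presumably $|v|\le 1-\delta$; note that $u^1$ itself is unbounded since $(u^0)^2-(u^1)^2=1$) cannot be read off from the conservation law as you suggest. The total field energy is infinite---the static background $-\tfrac{a}{2}|x^1|$ gives $\int(\partial_1 U)^2\,dx^1=\infty$---so there is no finite conserved energy controlling the particle's kinetic energy, and the finite conserved momentum does not obviously yield a uniform bound on $p^1$ without already controlling the field momentum, which is circular. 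Without a speed bound, nothing in your outline prevents the particle from being pushed toward light speed and riding along with the outgoing radiation indefinitely, so you never reach your time $T^*$.

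The paper sidesteps this entirely. Rather than a uniform speed bound, it proves by contradiction that $\int_0^\infty(1-|v(t)|)\,dt=\infty$: if the integral were finite, Cauchy--Schwarz would force $U_{\text{source}}(z)$ to grow only like $\sqrt{x^0}$, hence $m$ would grow linearly, hence $|v|=|p^1|/\sqrt{m^2+(p^1)^2}$ would be bounded away from $1$---contradiction. That weaker statement is exactly enough to push $z^1(x^0)\pm x^0$ out of the support of $V_0,V_1$ in finite time. After that point the cancellation of Remark~\ref{Cancellation} is complete and the equation is exactly $m\,\tfrac{du^1}{dx^0}=-\tfrac{a^2}{2}u^1$ with \emph{no} memory term $R$; since $m(x^0)\le Ax^0$, direct integration gives $|u^1(x^0)|\le C\,(x^0)^{-a^2/(2A)}$. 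So your Lyapunov machinery and your worry about $R$ are both unnecessary---once the escape lemma is in hand, the endgame is a two-line ODE computation.
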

 Accordingly (by Lorentz covariance), a charged particle undergoing uniform motion in some frame, when perturbed by some compactly supported radiation, asymptotically returns to that uniform motion.
This stability is not a result of radiation-reaction, for reasons explained in Remark~\ref{Cancellation}. It is instead a consequence of the charge's interaction with its own freely evolving field coming from the $-\frac{a}{2} |x|$ term in the initial data of (\ref{Intro Scalar IVP}). 
\begin{remark}
    In forthcoming work \cite{FETZ}, we show that such an asymptotic stability result does not hold for charges coupled to {\em massive} scalar fields in one space dimension. This is because the stationary particle's massive field has finite energy, as a result of which the charge's interaction with the field decays sufficiently rapidly in time, so that the motion may asymptote to a different steady state, which is a form of {\em orbital} stability. Asymptotic stability may thus be specific to charges coupled to massless scalar fields in one dimension. Indeed, if the dynamics of smeared-out charges \cite{spohn_2004} can be a guide, it is likely that this will {\em not} hold in general for electromagnetic point charges, or in higher dimensions.
\end{remark}

\par
In section 2 we derive the equations of motion for a scalar field from the principle of stationary action, and derive the force law from the assumption of energy conservation. In section 3 we present a proof of the global well-posedness result for the joint evolution of a single scalar particle and its field. In section 4 we study the asymptotics of our joint evolution, and provide a proof of the asymptotic stability of the stationary solution.
\section{Derivation of Equations of Motion}
\subsection{Field Equations From Principle of Stationary Action}
We derive the equations of motion for our scalar field using the principle of stationary action. The action for a point charge coupled to a massless scalar field $U$ defined on $1+1$ dimensional flat space-time\footnote{We work with signature $\eta_{\mu \nu}=\text{diag}(1,-1)$.} is given by
\begin{equation}\label{action}
    S[U,z]=\int\mathcal{L} \sqrt{-\eta} d^2x,
\end{equation}
 where the Lagrangian density $\mathcal{L}$ is defined via
\begin{equation}
\begin{split}
    \mathcal{L}(x):=&\frac{1}{\sqrt{-\eta}}  \int  -(\tilde m-aU(z))\sqrt{\eta_{\mu \nu}\dot{z}^\mu \dot{z}^\nu}\delta^{(2)}(x-z)d\theta 
    \\
   & + \frac{1}{2}\eta^{\mu \nu}\partial_\mu U \partial_\nu U(x).
\end{split}
\end{equation}
Here $z^\mu$, $\tilde m$, and $a$ represent the particle's space-time position, bare mass, and scalar charge respectively, while $\theta$ is an arbitrary parameterization of the particle's worldline.
Extremizing the action by taking variations with respect to $U$ returns 
\begin{equation}\label{field equation}
    \eta^{\mu \nu}\partial_\mu \partial_\nu U= a \int \sqrt{\eta_{\mu \nu}\dot{z}^\mu \dot{z}^\nu}\delta^{(2)}(x-z)d\theta= a\int \delta^{(2)}(x-z(\tau))d\tau, 
\end{equation}
where $\tau$ is the particle's proper time defined by $d\tau=\sqrt{\eta_{\mu \nu}\dot{z}^\mu \dot{z}^\nu}d\theta$. Equation (\ref{field equation}) shows that the point charge is acting as a singularity in the derivatives of the scalar field $U$. Given a world-line for our particle along with some specified initial data for $U$, we could solve for the evolution of $U$ by solving the associated initial value problem. However, we are interested in studying the joint evolution problem of our field-particle system. So the motion of the charge must be in accordance with all the scalar forces that act on it. We may naively derive the forces acting on our charges by taking variations of the action with respect to $z^\mu$. This leads to a familiar law of motion that is inconsistent with the field equations, namely:
\begin{equation}\label{Lorentz force}
    \frac{d p^\mu}{d \tau}=-a\partial^\mu U(z),
\end{equation}
where $p$ is the {\em dynamical momentum} defined by
\begin{equation}\label{dyn momentum}
    p^\mu :=(\tilde m-aU(z))u^\mu, \quad u^\mu:=\frac{dz^\mu}{d\tau}.
\end{equation}

The inconsistency arises since the force law \eqref{Lorentz force} derived from the principle of stationary action requires one to evaluate the first derivatives of the scalar field along the particle's world-line. However, the field equations derived from the same principle imply that the first derivatives of the field are undefined precisely along this world-line. So there can be no joint evolution of the field-particle system which extremizes the action. 

In non-rigorous settings, one is typically taught to ignore the ill-defined self-interactions terms. But the dynamics given by ignoring singular self-forces is off-shell. In particular, it does not conserve the total energy-momentum of the system, as we will show later. The problems we face here are not unique to scalar point particles, and we take inspiration from the work of Kiessling on force laws for electromagnetic particles in three space-dimensions \cite{Kiessling}, to derive a rigorous force law for our scalar point charges. 
\par 
We conclude this section by defining the {\em dynamical mass} $m$ which depends on the field $U$ evaluated along the particle's world-line
\begin{equation}\label{dyn mass}
    m(x^0):=\frac{p^\mu}{u^\mu}=\tilde m-aU(x^0,z^1(x^0)),
\end{equation}
where 
\begin{equation}
    z^1(x^0):=z^1(\tau^{-1}(x^0)),
\end{equation}
and $\tau^{-1}(x^0)$ is the unique value for which $z^0(\tau^{-1}(x^0))=x^0$.
\begin{remark}
The dynamical mass and momentum will frequently appear throughout our calculations, taking the place of mass and momentum wherever one would expect the latter two to appear.
\end{remark}
\subsection{Force Law From Conservation of Energy-Momentum}
In this section we will derive our force law from the assumption of the local conservation of energy-momentum. 
From the action \eqref{action} we derive the following energy density-momentum density-stress tensor ({\em energy tensor}, for short) for our system:
\begin{equation}
\begin{split}
    T^{\mu \nu}(x)= T_{p}^{\mu \nu}(x)+T_{S}^{\mu \nu}(x),
\end{split}
\end{equation}
where 
\begin{equation}
    T_p^{\mu \nu}:=m(x^0)\int u^\mu u^\nu \delta^{(2)}(x-z(\tau))d\tau, \hspace{0.5cm} T_S^{\mu \nu}:=(\partial^\mu U)(\partial^\nu U)-\frac{1}{2}\eta^{\mu \nu}(\partial_\alpha U)(\partial^\alpha U).
\end{equation}
The total energy tensor is singular along the particle's world-line, so we will take conservation of energy-momentum in the weak sense of distributions to derive our force law.
\begin{theorem}
Suppose $U$ is such that
\begin{equation}
   \forall x^0\geq 0, \ \lim_{\epsilon \rightarrow 0}\int_{z^1(x^0)-\epsilon}^{z^1(x^0)+\epsilon}T^{0\nu}_S(x^0,x^1) dx^1=0.
\end{equation}
(This requires $T_S^{0 \nu}$ be locally integrable\footnote{While this condition does not hold for fields sourced by scalar point charges in three space dimensions, it holds in one space dimension.}  around $z$.) Then, assuming  energy-momentum conservation in the weak sense of 
\begin{equation}
    \int_{\Omega}\partial_\mu T^{\mu 0}dV=0=\int_{\Omega}\partial_\mu T^{\mu 1}dV,
\end{equation}
for all tubular regions $\Omega$ around the particle's world-line, yields the unique force law
\begin{equation}\label{K Force 1}
    \frac{d p^{\nu}}{d \tau}=-\bigg[n_\mu T^{\mu \nu}_S(z^0,x^1)\bigg]_{x^1=z^1},
\end{equation}
where $n_\mu=(-u^1,u^0)$ is the space-like unit covector annihilated by $u^\mu$.
\end{theorem}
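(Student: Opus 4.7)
The plan is to apply the divergence theorem to a shrinking family of tubular neighborhoods of a finite segment of the world-line, and to balance the change in the particle's dynamical momentum against the flux of $T_S^{\mu\nu}$ across the two sides of the world-line. Fix $\tau_1<\tau_2$, set $t_k=z^0(\tau_k)$, and for small $\epsilon>0$ take the tube
\[
\Omega_\epsilon=\{(x^0,x^1):\ t_1\le x^0\le t_2,\ |x^1-z^1(x^0)|\le\epsilon\}.
\]
Its boundary consists of two caps at $x^0=t_1,t_2$ transverse to the world-line and two lateral faces $x^1=z^1(x^0)\pm\epsilon$ parallel to it. The weak conservation hypothesis then gives $\int_{\Omega_\epsilon}\partial_\mu T_p^{\mu\nu}\,dV+\int_{\Omega_\epsilon}\partial_\mu T_S^{\mu\nu}\,dV=0$ for every such $\epsilon$.

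For the particle term I would first rewrite
\[
T_p^{\mu\nu}(x)=\int p^\nu(\tau)\,u^\mu(\tau)\,\delta^{(2)}(x-z(\tau))\,d\tau,
\]
and then use the identity $u^\mu(\tau)\,\partial_\mu^x\delta^{(2)}(x-z(\tau))=-\tfrac{d}{d\tau}\delta^{(2)}(x-z(\tau))$ together with an integration by parts in $\tau$ to obtain the distributional equality $\partial_\mu T_p^{\mu\nu}=\int \tfrac{dp^\nu}{d\tau}\,\delta^{(2)}(x-z(\tau))\,d\tau$. Integrating over $\Omega_\epsilon$ then gives $p^\nu(\tau_2)-p^\nu(\tau_1)$ as soon as $\epsilon$ is small enough that the world-line segment between $\tau_1$ and $\tau_2$ lies inside $\Omega_\epsilon$.

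For the field term, which is smooth off the world-line, I would apply the classical divergence theorem on each of the two half-tubes $\{x^1\gtrless z^1(x^0)\}\cap\Omega_\epsilon$ separately, add the results, and then pass to the limit $\epsilon\to 0$. Each cap contribution is bounded by $\bigl|\int_{z^1(t_k)-\epsilon}^{z^1(t_k)+\epsilon}T_S^{0\nu}(t_k,x^1)\,dx^1\bigr|$ and hence vanishes by the local integrability hypothesis. A direct computation of the induced Lorentzian surface measure on each lateral face shows that it equals $n_\mu\,d\tau$ with $n_\mu=(-u^1,u^0)$ the unit outward covector on the right face and $-n_\mu$ the outward covector on the left face, so that in the limit the two lateral fluxes combine into $\int_{\tau_1}^{\tau_2}\bigl[n_\mu T_S^{\mu\nu}(z^0,x^1)\bigr]_{x^1=z^1}\,d\tau$. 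Combining the two contributions yields $p^\nu(\tau_2)-p^\nu(\tau_1)=-\int_{\tau_1}^{\tau_2}[n_\mu T_S^{\mu\nu}]_{x^1=z^1}\,d\tau$ for every $\tau_1<\tau_2$; differentiating in $\tau_2$ gives the claimed force law, and uniqueness is automatic because any other choice would violate this identity.

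The main obstacle is the lateral-face limit: one has to verify the Lorentzian surface measure and the sign and orientation of $n_\mu$ on each side, and in particular to ensure that the one-sided limits of $T_S^{\mu\nu}$ across the world-line exist so that the jump is well-defined. This rests on the specific structure of solutions to the $1{+}1$ dimensional wave equation with point-charge source, which produces finite directional traces of the stress from either side; the integrability hypothesis then controls the cap contributions and makes the whole limit process rigorous.
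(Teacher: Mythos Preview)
Your proposal is correct and follows essentially the same strategy as the paper: integrate the weak conservation law over a tube $\Omega_\epsilon$, identify the particle contribution with $p^\nu(\tau_2)-p^\nu(\tau_1)$, show the field caps vanish as $\epsilon\to 0$ by the integrability hypothesis, and read off the jump from the lateral faces parametrized by $\tau$ with outward covector $\pm n_\mu$. The only cosmetic differences are that the paper computes the particle contribution directly from the cap integrals (using $T_p^{0\nu}=p^\nu\delta(x^1-z^1)$) rather than via your distributional identity $\partial_\mu T_p^{\mu\nu}=\int\tfrac{dp^\nu}{d\tau}\,\delta^{(2)}(x-z)\,d\tau$, and it works with the outer boundary of the full tube rather than first splitting into half-tubes.
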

\begin{proof}
Fix $\nu$. By the definition of weak derivatives,
\begin{equation}
\int_\Omega \partial_\mu T^{\mu \nu} dV=\int_{\partial \Omega} T^{\alpha \nu} N_\alpha dS,
\end{equation}
where $N$ are the unit normal vectors to our boundary $\partial \Omega$ and $dS$ is the surface element induced by $\eta$. The boundary $\partial \Omega$ consists of four parts (see Figure~\ref{fig:region}): Two space-like curves given by 
\begin{equation}
\begin{split}
    &\bar{T}_1=\{x\in \mathbb{R}^{1,1}| z^1(T_1)-\epsilon<x^1<z^1(T_1)+\epsilon, x^0=T_1 \}
    \\
    &\bar{T}_2=\{x\in \mathbb{R}^{1,1}| z^1(T_2)-\epsilon<x^1<z^1(T_2)+\epsilon, x^0=T_2 \}
\end{split}
\end{equation}
\begin{figure}
\centering
\includegraphics[scale=0.6]{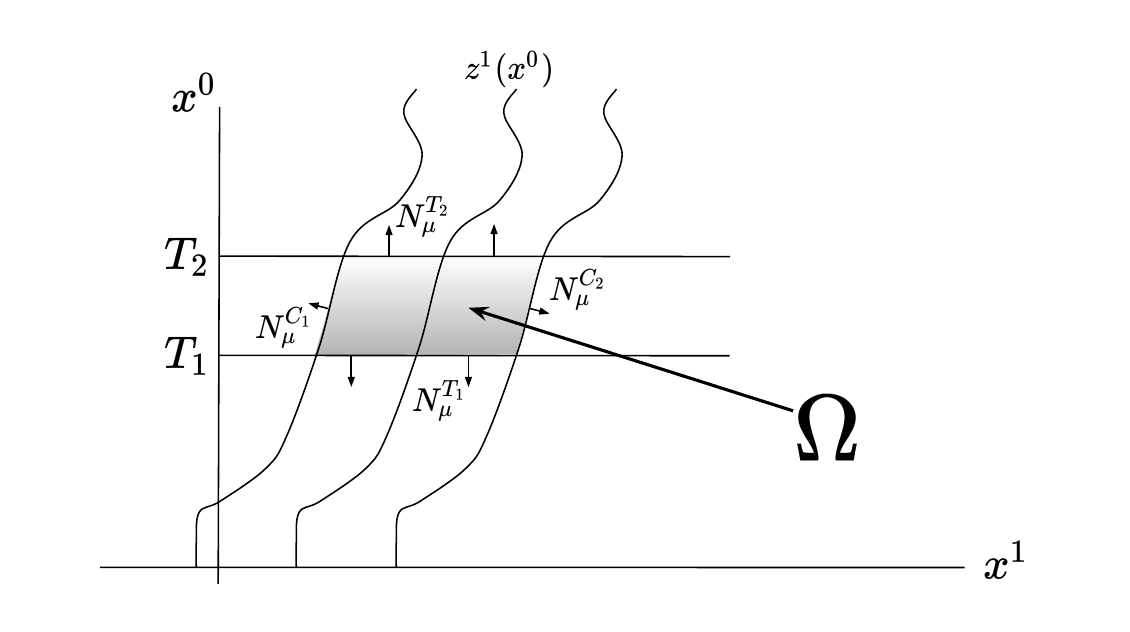}
\caption{\label{fig:region} Region of integration and its normals}
\end{figure}
and two time-like curves given by
\begin{equation}
\begin{split}
&\bar{C}_1=\{x\in \mathbb{R}^{1,1}| T_1<x^0<T_2, x^1=z^1(x^0)-\epsilon\}
\\
&\bar{C}_2=\{x\in \mathbb{R}^{1,1}| T_1<x^0<T_2, x^1=z^1(x^0)+\epsilon\}.
\end{split}
\end{equation}
The unit normal covectors for these boundaries are given by
\begin{equation}
    N^{T_1}_\mu=-\begin{pmatrix}
    1
    \\
    0
    \end{pmatrix}, \quad N^{T_2}_\mu=\begin{pmatrix}
    1
    \\
    0
    \end{pmatrix},\quad  N^{C_1}_\mu=-\begin{pmatrix}
    -\frac{\partial z^1}{\partial \tau}
    \\
    \frac{\partial z^0}{\partial \tau}
    \end{pmatrix}, \quad
     N^{C_2}_\mu=\begin{pmatrix}
    -\frac{\partial z^1}{\partial \tau}
    \\
    \frac{\partial z^0}{\partial \tau}
    \end{pmatrix}.
\end{equation}
Notice that $N_\mu^{C_2}=-N_\mu^{C_1}=n_\mu$ is the unit covector annihilated by $u^\mu$, up to a sign.
Since $T_p^{\mu \nu}$ has no support on the boundaries $\bar{C}_1$ and $\bar{C}_2$ their contribution to the surface integral vanishes, and we are left with
\begin{equation}\label{Particle Flux}
\int_{\partial \Omega} T^{\alpha \nu}_p N_\alpha dS=\int_{z^1(T_2)-\epsilon}^{z^1(T_2)+\epsilon}T^{0 \nu}_p dx^1-\int_{z^1(T_1)-\epsilon}^{z^1(T_1)+\epsilon}T^{0 \nu}_p dx^1.
\end{equation}
Recall that 
\begin{equation}
   T^{0 \nu}_p= m(x^0) \int u^0 u^\nu \delta^{(2)}(x-z(\tau))d\tau=p^\nu \delta(x^1-z^1).
\end{equation}
Plugging this into (\ref{Particle Flux}), we arrive at
\begin{equation}
    \int_{\partial \Omega} T^{\alpha \nu}_p N_\alpha dS=p^\nu(T_2)-p^\nu (T_1)=\int_{\tau_1}^{\tau_2}\frac{d p^\nu}{d \tau}d\tau.
\end{equation}
where $z^0(\tau_i)=T_i$. So our assumption of weak conservation of energy-momentum grants us the following equation
\begin{equation}
    \int_{\tau_1}^{\tau_2}\frac{d p^\nu}{d \tau}d\tau=-\int_{\partial \Omega} T_S^{\alpha \nu} N_\alpha dS,
\end{equation}
which states that the  total change in the particle's momentum is equal to the total energy flux of the field through a tube around the world-line of the particle. The L.H.S of this equation has no dependence on the width of our tubular region $\epsilon$, so it follows that the R.H.S should also have no dependence on $\epsilon$. Taking the limit as $\epsilon$ goes to zero returns
\begin{equation}\label{limit worldtube}
    \int_{\tau_1}^{\tau_2}\frac{d p^\nu}{d \tau}d\tau=-\lim_{\epsilon \to 0}\int_{\partial \Omega} T_S^{\alpha \nu} N_\alpha dS
.\end{equation}
By our assumptions two of the boundary contributions vanish:
\begin{equation}
   \lim_{\epsilon \rightarrow 0} \int_{\bar{T}_2} T^{\alpha \nu}_S N_\alpha dS=\lim_{\epsilon \rightarrow 0}\int_{z^1(T_2)-\epsilon}^{z^1(T_2)+\epsilon}T^{0\nu}_S dx^1=0,
\end{equation}
and
\begin{equation}
   \lim_{\epsilon \rightarrow 0} \int_{\bar{T}_1} T^{\alpha \nu}_S N_\alpha dS=\lim_{\epsilon \rightarrow 0}\int_{z^1(T_1)-\epsilon}^{z^1(T_1)+\epsilon}-T^{0\nu}_S dx^1=0.
\end{equation}
When we parameterize the remaining space-like boundaries by $\tau$, we obtain
\begin{equation}
    \int_{\bar{C}_2} T^{\alpha \nu}_S N^{\bar{C}_2}_\alpha dS=\int_{\tau_1}^{\tau_2}N_\alpha^{C_2}T^{\alpha \nu}_S(z^0(\tau),z^1(\tau)+\epsilon) d\tau,
\end{equation}
and
\begin{equation}
    \int_{\bar{C}_1} T^{\alpha \nu}_S N^{\bar{C}_1}_\alpha dS=-\int_{\tau_1}^{\tau_2}N_\alpha^{C_2}T^{\alpha \nu}_S (z^0(\tau),z^1(\tau)-\epsilon) d\tau_.
\end{equation}
Plugging these back into equation (\ref{limit worldtube}) returns
\begin{equation}
    \int_{\tau_1}^{\tau_2}\frac{d p^\nu}{d \tau}d\tau=-\int_{\tau_1}^{\tau_2}\bigg[n_\mu T^{\mu \nu}_S (z^0,x^1)\bigg]_{x^1=z^1} d\tau.
\end{equation}
Since this holds for all $\tau_1, \tau_2$, Equation (\ref{K Force 1}) immediately follows.
\end{proof}
\section{Well-Posedness of Joint Evolution}
In this section we present the global well-posedness result for a scalar particle perturbed by some smooth radiation. We work in the fixed Lorentz frame where the scalar charge was stationary and remained at the origin  for all time $x^0<0$. The radiation will appear as smooth initial data in the IVP for $U$.
\subsection{Joint IVP Set Up} We begin by setting up the joint initial value problem for our field-particle system. The Cauchy problem for the scalar field $U$ is given by
\begin{equation}\label{Joint 3}
\left\{\begin{array}{rcl}
\eta^{\mu \nu}\partial_\mu \partial_\nu U(x)&=& a\int \delta^{(2)}(x-z(\tau))d\tau 
\\
U(0,x^1)&=&-\frac{a}{2}|x^1|+V_0(x^1)
\\
\partial_0 U(0,x^1)&=&V_1(x^1),
\end{array}\right.
\end{equation}
where $V_0$ and $V_1$ are smooth functions which represent the external radiation. The term $-\frac{a}{2}|x^1|$ is included in the initial data to represent the field that was sourced by the stationary charge. It is compatible with the particle being at the origin for all $x^0<0$.  Since our evolution equation is linear, it is natural to split the Cauchy problem into three parts by setting 
\begin{equation}\label{U Decomposition}
U=V+U_\text{stat}+U_\text{source}
\end{equation}
where
\begin{equation}\label{Freely Evolving}
\left\{\begin{array}{rcl}
\eta^{\mu \nu}\partial_\mu \partial_\nu V&=&0
\\
V(0,x^1)&=&V_0(x^1)
\\
\partial_0 V(0,x^1)&=&V_1(x^1)
\end{array}\right. \quad \left\{\begin{array}{rcl}
\eta^{\mu \nu}\partial_\mu \partial_\nu U_{\text{stat}}&=&0
\\
U_{\text{stat}}(0,x^1)&=&-\frac{a}{2}|x^1|
\\
\partial_0 U_{\text{stat}}(0,x^1)&=&0
\end{array}\right.
\end{equation}
\begin{equation}\label{Source PDE}
\left\{\begin{array}{rcl}
\eta^{\mu \nu}\partial_\mu \partial_\nu U_\text{source}(x)&=&\int a\delta^{(2)}(x-z(\tau))d\tau
\\
U_\text{source}(0,x^1)&=&0
\\
\partial_0 U_\text{source}(0,x^1)&=&0
\end{array}\right.
\end{equation}
The solutions to (\ref{Freely Evolving}) are given by d'Alembert's formula while the solution to (\ref{Source PDE}) is given by Duhamel's principle. $U_\text{source}$ can be written as an integral equation which depends on the trajectory of the charge. From these solutions it is easy to calculate the R.H.S of our force law:
\begin{equation}\label{Force Law}
   \frac{dp^1}{d\tau}= -\bigg[n_\mu T_S^{\mu 1}(x^0,x^1)\bigg]_{x^1=z^1}=-\frac{a^2}{2}u^1+a\partial_1 V(x^0,z^1).
\end{equation} 
(See \cite{SEA} for a similar calculation.)
\begin{remark}
The force law derived from the conservation of energy is similar to the one derived from the principle of stationary action. However, the singular ``self-force" term has been determined rather than ignored, and the expression for it guarantees the conservation of the system's total energy-momentum.
\end{remark}
Written in terms of the dynamical mass and momentum, the initial value problem for the charge's trajectory is
\begin{eqnarray}\label{Joint 1}
    \frac{d z^1}{dx^0} & = & \frac{p^1}{\sqrt{m^2+(p^1)^2}}, \quad 
\\
\label{Joint 2}
   \frac{d p^1}{d x^0} & = & -\frac{a^2}{2}\frac{p^1}{\sqrt{m^2 +(p^1)^2}}+a\frac{m}{\sqrt{m^2+(p^1)^2}}\partial_1 V(x^0,z^1), 
\end{eqnarray}
with
\begin{equation}
  z^1(0)=0,  \qquad p^1(0)=0.
\end{equation}
We now state and prove the first main result of this paper.
\begin{theorem}\label{Global}
For any set of particle parameters $\{\tilde m>0,a\in \mathbb{R}\setminus\{0\}\}$, and for any set of sufficiently small, smooth, compactly supported functions $V_0(x^1)$, $V_1(x^1)$, the joint initial value problem given by (\ref{Joint 1}), (\ref{Joint 2}), and (\ref{Joint 3}) admits a unique global-in-time solution with $U$ belonging to the space of Lipschitz continuous scalar fields on $\mathbb{R}_{\geq 0}\times \mathbb{R}$ and $z$ a Lipschitz continuous world-line.
\end{theorem}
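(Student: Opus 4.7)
The plan is to convert the coupled PDE--ODE system into a closed integro-differential system for just the trajectory $(z^1(x^0),p^1(x^0))$, and then proceed by a Banach fixed-point argument for local existence followed by uniform a priori bounds for global extension.

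First I would exploit the linearity of the wave equation via the decomposition \eqref{U Decomposition}. By d'Alembert's formula, $V$ is smooth and uniformly bounded together with $\partial_1 V$, with all such norms small by the hypothesis on $V_0,V_1$; the free evolution of the stationary data is explicit, $U_{\text{stat}}(x^0,x^1)=-\frac{a}{4}(|x^1+x^0|+|x^1-x^0|)$; and Duhamel's principle expresses $U_{\text{source}}$ as a functional of the unknown trajectory,
\begin{equation*}
U_{\text{source}}(x^0,x^1)=\frac{a}{2}\int_0^{x^0}\frac{1}{u^0(s)}\,\mathbf{1}_{|x^1-z^1(s)|<x^0-s}\,ds.
\end{equation*}
Whenever the candidate trajectory is strictly sub-luminal, the indicator equals $1$ at $x^1=z^1(x^0)$ for every $s<x^0$, and one computes that $U_{\text{stat}}+U_{\text{source}}$ evaluated on the trajectory reduces to $-\frac{a}{2}\int_0^{x^0}(1-1/u^0(s))\,ds$. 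This yields the closed expression for the dynamical mass
\begin{equation*}
m(x^0)=\tilde m - aV(x^0,z^1(x^0)) + \frac{a^2}{2}\int_0^{x^0}\Bigl(1-\frac{1}{u^0(s)}\Bigr)\,ds,
\end{equation*}
so that together with the force law \eqref{Force Law} the system \eqref{Joint 1}--\eqref{Joint 2} closes as an integro-differential system for $(z^1,p^1)$ alone.

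Next I would establish local existence on a short interval $[0,T]$ by Banach fixed-point. On the complete metric space of Lipschitz trajectories with $(z^1(0),p^1(0))=(0,0)$ and $|p^1|$ bounded on $[0,T]$ (so that $|u^1|$ stays uniformly below $1$), define the map $\Phi$ that substitutes a candidate trajectory into the functional $m$ above, evaluates $V$ and $\partial_1 V$ along the candidate, and integrates \eqref{Joint 1}--\eqref{Joint 2} from the initial data. Smoothness of $V$ together with the Lipschitz dependence of the $u^0$-integral on the candidate yields a contraction estimate for $T$ small enough, giving a unique local solution.

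To continue the solution to $[0,\infty)$ I would use two a priori bounds. Since $1-1/u^0\ge 0$, the mass satisfies $m(x^0)\ge \tilde m-|a|\|V\|_\infty$, strictly positive once the data are sufficiently small, and hence the denominators $\sqrt{m^2+(p^1)^2}$ appearing in \eqref{Joint 1}--\eqref{Joint 2} are uniformly bounded below. Using $|p^1/\sqrt{m^2+(p^1)^2}|<1$ and $m/\sqrt{m^2+(p^1)^2}\le 1$ directly in \eqref{Joint 2} gives $|dp^1/dx^0|\le \tfrac{a^2}{2}+|a|\|\partial_1 V\|_\infty$, so $p^1$ grows at most linearly and cannot blow up in finite time. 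Sub-luminality of $z^1$ is automatic from $|dz^1/dx^0|=|p^1|/\sqrt{m^2+(p^1)^2}<1$. These uniform bounds allow the local argument to be iterated on intervals of uniform length, producing a global-in-time solution.

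The main obstacle is the non-local-in-time dependence of $m$, and hence of the right-hand sides, on the past trajectory through the $u^0$-integral; this rules out a direct appeal to Picard--Lindel\"of for ODEs and forces the contraction to live on the trajectory space itself rather than a finite-dimensional phase space. One also has to verify that the one-sided limits in $[n_\mu T_S^{\mu 1}]_{x^1=z^1}$ depend Lipschitz-continuously on the candidate trajectory in the topology used for the fixed-point argument, and that the assembled $U=V+U_{\text{stat}}+U_{\text{source}}$ indeed has the Lipschitz regularity claimed in the theorem when $z^1$ is only known a priori to be Lipschitz.
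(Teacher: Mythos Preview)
Your strategy is sound and would prove the theorem, but it differs from the paper's in two respects worth noting. First, the paper does not reduce to the pair $(z^1,p^1)$ with $m$ as a derived functional; instead it promotes $W(x^0):=U_{\text{source}}(x^0,z^1(x^0))$ to an independent unknown and works with the three-component system $(Q,p,W)$ satisfying \eqref{Int Eq 1}--\eqref{Int Eq 3}, so that $m=\tilde m-aU_{\text{stat}}-aV-aW$ is an \emph{explicit} algebraic function of the state and no implicit equation for $m$ through the $u^0$-integral needs to be unwound. Second, rather than proving local existence and then continuing via the a~priori bounds you identify, the paper obtains global existence in a single Banach fixed-point step by working in the weighted space $L_{\vec k,\gamma}$ with norm $\sup_{t\ge0}e^{-\gamma t}|\,\cdot\,|$: the map $F_*$ has Lipschitz constant $L/\gamma$ in this norm, so choosing $\gamma>L$ gives a global contraction directly. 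Your local-plus-continuation route is more classical and arguably more transparent, and your a~priori bounds (positivity of $m$ from $1-1/u^0\ge0$, linear growth of $p^1$, automatic sub-luminality) are exactly the estimates underlying the paper's Lemma~\ref{Scaled mass bound} and Proposition~\ref{contraction}; the paper's weighted-norm trick simply packages the same information into a one-shot argument. One point you should make precise in your write-up is how $u^0$ is read off from a \emph{candidate} $(z^1,p^1)$ before the fixed point is reached---either by differentiating the Lipschitz candidate $z^1$ directly, or by augmenting the state as the paper does---since your displayed formula for $m$ otherwise hides a self-reference through $u^0$.
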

\begin{remark}
The smallness condition taken on the initial data is $||V_0||_{L^\infty} +\frac{1}{2}||V_1||_{L^1} \leq \frac{\tilde{m}}{|a|}$, and is placed to ensure that the mass $m(x^0)$ is bounded from below.
\end{remark}
\begin{remark}
    In fact, the regularity of the world-line $z$ can be improved to $C^\infty$ by applying a bootstrap argument to the system of differential equations (\ref{Joint 1}), (\ref{Joint 2}). However, the singularity appearing in (\ref{Joint 3}) clearly marks that we cannot expect anything better than Lipschitz regularity for the scalar field $U$.
\end{remark}
\subsection{Proof of Well-Posedness}

{\em Strategy for the proof}:  Imagine that instead of solving for the joint evolution of the particle and the field, we   solve for the dynamics of one when the other is given. For a given charge trajectory one can solve for $U$ by plugging the trajectory into equation (\ref{Joint 3}). Conversely, given the dynamics of the field one can solve for a test charge's trajectory via (\ref{Force Law}). Consider what happens if one were to recursively define a sequence of trajectories and field solutions by using the $i$th trajectory to solve for the $(i+1)$th field solution via (\ref{Joint 3}), and then using the $(i+1)$th field to solve for the $(i+1)$th trajectory via (\ref{Force Law}), \textit{ad infinitum}. If this sequence converges, it would converge to a trajectory which sources the same field solution that guides it. This is the key idea behind our proof, and we will see that this process does converge to a unique joint evolution. 

In this section we provide a proof of the well-posedness of the joint IVP. We will do this by transforming our joint IVP into a set of integral equations. The solution to (\ref{Source PDE}) can be written in the form of an integral equation using Duhamel's principle
\begin{equation}\label{Duhamel}
\begin{split}
    U_{\text{source}}(x^0,x^1)&=\frac{1}{2}\int_{0}^{x^0}\int_{x^1-(x^0-t)}^{x^1+(x^0-t)}a\int \delta^{(2)}(x-z(\tau))d\tau ds dt
    \\
    &=\frac{1}{2}\int_{0}^{x^0}\frac{a}{u^0(t)} \chi_{[x^1-(x^0-t),x^1+(x^0-t)]}(z^1(t))dt.
\end{split}
\end{equation}
Since our system's dynamics depend only on the field evaluated along the world-line of the particle, it suffices to consider 
\begin{equation}\label{Source Int Equation}
\begin{split}
    U_{\text{source}}(x^0,z^1(x^0))
    :=\frac{a}{2}\int_{0}^{x^0}\frac{1}{u^0(t)} dt=\frac{a}{2}\int_{0}^{x^0}\frac{m(t)}{\sqrt{m^2(t)+p^2(t)}} dt.
\end{split}
\end{equation}
\begin{theorem}\label{Global Int}
Let $U_\text{stat}$ and $V$ be the solutions to \eqref{Freely Evolving}.  Given the conditions of (\ref{Global}), there exists a unique, global in time solution to the following set of integral equations 
\begin{eqnarray}\label{Int Eq 1}
    Q(x^0)&=&\int_0^{x^0} \frac{p}{\sqrt{m^2 + p^2}}dt
\\
\label{Int Eq 2}
    p(x^0)&=&\int_0^{x^0}-\frac{a^2}{2}\frac{p}{\sqrt{m^2 +p^2}}+a\frac{m}{\sqrt{m^2+p^2}}\partial_1 V(t,Q(t))\  dt
\\
\label{Int Eq 3}
    W(x^0)&=&\int_{0}^{x^0}\frac{a}{2}\frac{m}{\sqrt{m^2+p^2}} dt,
\end{eqnarray}
where $m(t,Q(t),W(t)):=\tilde{m}-aU_\text{stat}(t,Q(t))-aV(t,Q(t))-aW(t)$.
\end{theorem}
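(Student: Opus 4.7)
The plan is to recast the system (\ref{Int Eq 1}--\ref{Int Eq 3}) as a fixed-point problem for the map $\Phi:(Q,p,W)\mapsto(\tilde Q,\tilde p,\tilde W)$ obtained by substituting the input triple into the right-hand sides (with $m$ recomputed via its defining formula). On a suitable closed ball in $C([0,T];\mathbb{R}^3)$ with the supremum norm, $\Phi$ is well-defined and Lipschitz as soon as $m(t,Q(t),W(t))$ stays bounded away from zero along any candidate trajectory: the integrands $p/\sqrt{m^2+p^2}$ and $m/\sqrt{m^2+p^2}$ are then smooth and bounded with uniformly bounded partials, $U_{\mathrm{stat}}(t,\cdot)$ is Lipschitz in space, and $\partial_1 V\in C^\infty_c(\mathbb{R}^{1,1})$ by d'Alembert applied to smooth compactly supported data. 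A standard Picard--Lindel\"of / Banach fixed-point argument starting at $(0,0,0)$ then produces a unique local solution on some $[0,T_0]$.

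To extend to all $t\ge 0$, I would derive a priori bounds ruling out both blow-up of $(Q,p,W)$ and collapse of $m$. Since the integrand of (\ref{Int Eq 1}) has absolute value $<1$, the world-line is subluminal, $|Q(t)|\le t$. D'Alembert applied to the data of $U_{\mathrm{stat}}$ yields the closed form $U_{\mathrm{stat}}(t,x)=-\tfrac{a}{2}\max(|x|,|t|)$, hence along any subluminal path
\begin{equation}
-aU_{\mathrm{stat}}(t,Q(t))\;=\;\tfrac{a^2}{2}\max(|Q(t)|,t)\;=\;\tfrac{a^2}{2}\,t.
\end{equation}
Combined with (\ref{Int Eq 3}) this produces the crucial cancellation
\begin{equation}
-aU_{\mathrm{stat}}(t,Q(t))-aW(t)\;=\;\tfrac{a^2}{2}\!\int_0^t\!\Bigl(1-\tfrac{1}{u^0(s)}\Bigr)ds\;\ge\;0,
\end{equation}
since $u^0\ge 1$. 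Using d'Alembert again to bound $|V(t,x)|\le\|V_0\|_{L^\infty}+\tfrac12\|V_1\|_{L^1}$ and invoking the smallness hypothesis of Theorem~\ref{Global} then gives
\begin{equation}
m(t)\;\ge\;\tilde m-|a|\bigl(\|V_0\|_{L^\infty}+\tfrac12\|V_1\|_{L^1}\bigr)\;=:\;m_0\;>\;0
\end{equation}
uniformly on the interval of existence. The integrand in (\ref{Int Eq 2}) is then bounded by $\tfrac{a^2}{2}+|a|\,\|\partial_1 V\|_{L^\infty}$ and that in (\ref{Int Eq 3}) by $|a|/2$, so $|p(t)|,|W(t)|\le C(1+t)$.

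These bounds preclude both blow-up of $(Q,p,W)$ and degeneracy of $m$ in finite time, so the continuation principle upgrades local to global existence; uniqueness is inherited directly from the local Lipschitz estimate. The main obstacle is the global lower bound on $m$: naively the $-aW$ term drives the dynamical mass linearly downward in $t$, and only its precise cancellation with the linearly growing $-aU_{\mathrm{stat}}$ contribution along the subluminal world-line prevents $m$ from reaching zero. Spotting this structural cancellation---essentially the same one flagged in Remark~\ref{Cancellation} as the reason radiation reaction is absent from this model---is the decisive step separating a proof of genuine global well-posedness from one yielding only local existence.
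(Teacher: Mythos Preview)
Your argument is correct, but it proceeds differently from the paper. You run a standard local Picard iteration in $C([0,T];\mathbb{R}^3)$ and then globalize by a continuation principle, using the a priori estimate $m(t)\ge m_0>0$ that you obtain from the cancellation between $-aU_{\mathrm{stat}}(t,Q(t))=\tfrac{a^2}{2}t$ and $-aW(t)$ along a subluminal path. The paper instead works directly on the half-line in the weighted Lipschitz space $L_{\vec{k},\gamma}(\mathbb{R}_{\ge 0})$ with norm $\sup_{t\ge 0}e^{-\gamma t}|\cdot|$, choosing $\vec{k}=(1,K,|a|/2)$ so that every candidate in the space automatically satisfies $|Q(t)|\le t$ and $|W(t)|\le\tfrac{|a|}{2}t$; the mass lower bound then holds on the entire space, not just along solutions, and a single Banach contraction (choosing $\gamma$ larger than the Lipschitz constant of the integrand) yields global existence and uniqueness in one stroke. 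Your route is more modular and closer to textbook ODE theory; the paper's route avoids the continuation step at the cost of setting up a bespoke function space. One small slip: $\partial_1 V$ is not compactly supported in $\mathbb{R}^{1,1}$ (its support lies in an unbounded strip between two light cones), but it is uniformly bounded and globally Lipschitz, which is all your argument actually uses.
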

\begin{proof}
We treat this a fixed-point problem. 
Introduce $q(\cdot)=\begin{pmatrix} Q(\cdot)
\\
p (\cdot)
\\
W (\cdot)
\end{pmatrix}$ so that we may write (\ref{Int Eq 1}), (\ref{Int Eq 2}), and (\ref{Int Eq 3}) as
\begin{equation}
    q(x^0)=\int_{0}^{x^0} f(q(t),t)dt:= F_{x^0} (q(\cdot)).
\end{equation}
We seek to prove the existence and uniqueness of a fixed point for the function $F_{*}(q(\cdot))$, which maps a given curve in $\mathbb{R}^3$ to another curve in $\mathbb{R}^3$. Notice that any such fixed point will have a bounded first derivative in all three components. Define $L_{k,\gamma}(\mathbb{R}_{\geq 0})$ to be the normed set of Lipschitz  continuous functions $l(\cdot):\mathbb{R}_{\geq 0} \rightarrow \mathbb{R}$ with Lipschitz constant $k$, and which satisfy $l(0)=0$. We equip this space with the metric induced by the weighted $L^\infty$ norm
\begin{equation}
    ||l(\cdot)||_\gamma=\sup_{t\geq 0}e^{-\gamma t}|l(t)|<\infty.
\end{equation}
We extend this definition for maps from $\mathbb{R}_{\geq 0}\rightarrow \mathbb{R}^n$ by defining 
\begin{equation}
    L_{\vec{k},\gamma}(\mathbb{R}_{\geq0})=\prod_{i=1}^n L_{k_i, \gamma}(\mathbb{R}_{\geq0})
\end{equation}
(Cartesian product), where $\vec{k}=(k_1,k_2, ... k_n)$. 
\begin{remark} For each fixed $\vec{k}$, $L_{\vec{k},\gamma}$ is a one-parameter family of metric spaces that share the same elements but differ in their equipped metric.  
\end{remark}
\begin{lemma} Fixing both $\vec{k}$ and $\gamma>0$, we have that 
$L_{\vec{k},\gamma}$ equipped with the metric induced by $||\cdot||_\gamma$ is a complete metric space.
\end{lemma}
\begin{proof}
 Corollary (\ref{Complete Subset cor}) in the appendix.
\end{proof}
Letting $K$ be an upper bound for $\frac{a^2}{2} +||a\partial_1 V_0||_{L^\infty}+||aV_1||_{L^\infty}$, and $\vec{k}=(1,K,\frac{|a|}{2})$, it becomes clear that our desired fixed point should, if it exists, reside in $L_{\vec{k},\gamma}$.

\begin{proposition}\label{contraction}
$F_*(q(\cdot)):L_{\vec{k},\gamma}\rightarrow L_{\vec{k},\gamma}$ is a well defined mapping.
\end{proposition}
\begin{proof}
Let $q=\begin{pmatrix}Q(\cdot) \\ p(\cdot) \\ W(\cdot)
\end{pmatrix}\in L_{\vec{k},\gamma}$, and write $F_*(x(\cdot))=\begin{pmatrix}P(*) \\ \rho(*) \\ S(*)
\end{pmatrix}$. Notice
\begin{equation}
\begin{split}
    \forall \theta, \theta' \geq 0, \quad |\rho(\theta)-\rho(\theta')|&=\left|\int_\theta^{\theta'}-\frac{a^2}{2}\frac{p}{\sqrt{m^2 +p^2}}+a\frac{m}{\sqrt{m^2+p^2}}\partial_1 V(t,Q(t))\ dt\right|
    \\
    &\leq K|\theta-\theta'|.
\end{split}
\end{equation}
So $\rho(*)\in L_{K,\gamma}(\mathbb{R}_{\geq 0})$. The proofs for $P(*)$ and $S(*)$ are analogous.
\end{proof}
\par
In order to prove the existence of a unique fixed point of $F_*(q(\cdot))$, we need to show it is a contraction mapping on $L_{\vec{k},\gamma}$. Towards this, we prove two lemmas.
\begin{lemma}\label{Scaled mass bound}
For $|Q(t)|\leq t$ and $|W(t)|\leq \frac{|a|}{2}t$, the quantity $m_V:=\tilde m-|a|(||V_0||_{L^\infty}+\frac{1}{2}||V_1||_{L^1})$ is a lower bound for $m(Q(t),W(t))$, and is positive by our smallness assumption on $V_0$, $V_1$.
\end{lemma}
\begin{proof} (\ref{Scaled mass bound}). Since $|Q(t)|\leq t$, (\ref{Freely Evolving}) lets us compute that
\begin{equation}
    U_\text{stat}(t,Q(t))=-\frac{a}{2}t.
\end{equation}
Thus,
\begin{equation}
\begin{split}
    m(W(t),Q(t))&=\tilde m-aV(Q(t))+\frac{a^2}{2}t -aW(t)
    \\
    &\geq\tilde m-aV(Q(t))\geq \tilde{m}-|a| ||V||_{L^\infty}\geq m_V.
\end{split}
\end{equation}
\end{proof}
\begin{lemma}\label{Contraction Mapping}
  Let $Y_\gamma$ be a complete metric space equipped with the metric induced by $||\cdot||_\gamma$,  such that $F_{*}(q(\cdot)):Y_\gamma \rightarrow Y_\gamma $ is a well defined mapping. Suppose there exists an $L<\gamma$ such that   for all $q_1(\cdot),q_2(\cdot) \in Y_\gamma$
\begin{equation}
    ||f(q_2(\tau),\tau)-f(q_1(\tau),\tau)||_X \leq L ||q_2(\tau)-q_1(\tau)||_X.
\end{equation}
  Then $F_{*}(q(\cdot)):Y_\gamma \rightarrow Y_\gamma$ is a contraction mapping.
 \end{lemma}
\begin{proof}
Theorem (\ref{Unique Global}) in the appendix. \end{proof}
\begin{proposition}\label{Lipchitz}
There exists an $L<\infty$ independent of $\gamma$ such that for all $q_1(\cdot), q_2(\cdot) \in L_{\vec{k},\gamma}$, 
\begin{equation}
    ||f(q_2(t),t)-f(q_1(t),t)||\leq L||q_2(t)-q_1(t)||.
\end{equation}
\end{proposition}
\begin{proof} 
Fix $t\geq 0$. We omit arguments of $t$ for ease of notation, and write $q_1=\begin{pmatrix}Q \\ p \\ W
\end{pmatrix},  q_2=\begin{pmatrix}P \\ \rho \\ S
\end{pmatrix}$. By the triangle inequality we have
\begin{equation}
\begin{split}
    &||f(q_2)-f(q_1)||\leq
    \\
&\leq \sum_{i=1}^3 |f_i\begin{pmatrix}Q \\ p \\ W
\end{pmatrix}-f_i\begin{pmatrix}P \\ p \\ W
\end{pmatrix}|+|f_i\begin{pmatrix}P \\ p \\ W
\end{pmatrix}-f_i\begin{pmatrix}P \\ \rho \\ W
\end{pmatrix}|+|f_i\begin{pmatrix}P \\ \rho \\ W
\end{pmatrix}-f_i\begin{pmatrix}P \\ \rho \\ S
\end{pmatrix}|,
\end{split}
\end{equation}
where $f_i$ denotes the components of $f$. There are nine terms that we wish to bound. For the sake of brevity, we will restrict ourselves to the least trivial bound. By the mean value theorem there exists an $O$ between $Q$ and $P$ such that
\begin{equation}
    |f_2\begin{pmatrix}Q \\ p \\ W
\end{pmatrix}-f_2\begin{pmatrix}P \\ p \\ W
\end{pmatrix}|=|Q-P||D_1f_2\begin{pmatrix}O \\ p \\ W
\end{pmatrix}|.
\end{equation}
We wish to bound $D_1 f_2$. Recall
\begin{equation}
     f_2\begin{pmatrix}O \\ p \\ W
\end{pmatrix}=-\frac{a^2}{2}\frac{p}{\sqrt{m^2 +p^2}}+a\frac{m}{\sqrt{m^2+p^2}}\partial_1 V(O).
\end{equation}
Since $O$ is between $Q$ and $P$, $m(O,W)$ is bounded from below by $m_V$. Also, $|\frac{dm(Q,W)}{dQ}(O,W)|=|a\partial_1V(O)|$ which is bounded from above by $K$. Let $K'$ be the Lipschitz constant of $\partial_1 V$. Tedious calculations then yield the following bound
\begin{equation}
    |D_1f_2\begin{pmatrix}O \\ p \\ W
\end{pmatrix}|\leq \frac{a^2}{2}\frac{K}{m_V} +\frac{K^2}{m_V}+|a|K'=:M_{1,2}.
\end{equation}
Performing a similar analysis on the other 8 terms yields a bound of the form
\begin{equation}
\begin{split}
    ||f(q_2)-f(q_1)||&\leq |Q-P|\sum_{j=1}^3 M_{1,j}+|p-\rho|\sum_{j=1}^3 M_{2,j}+|W-S|\sum_{j=1}^3 M_{3,j}
    \\
    &\leq ||q_2-q_1||\sqrt{(\sum_{j=1}^3 M_{1,j})^2+(\sum_{j=1}^3 M_{2,j})^2+(\sum_{j=1}^3 M_{3,j})^2}.
\end{split}
\end{equation}
\end{proof}
\begin{corollary}
Applying Lemma (\ref{Contraction Mapping}) with $Y_\gamma= L_{\vec{k},\gamma}$ and $\gamma>L$, we find that there exists a unique $q\in L_{\vec{k},\gamma}(\mathbb{R}_{\geq 0})$  satisfying 
\begin{equation}
    q(x^0)=F_{x^0} (q(\cdot))=\int_{0}^{x^0} f(q(t),t)dt
\end{equation}
for all $x^0\geq0$. This concludes our proof of Theorem (\ref{Global Int}).
\end{corollary}
\end{proof}
\begin{porism}
Letting $q(x^0)=\begin{pmatrix}Q(x^0) \\ p(x^0) \\ W(x^0)
\end{pmatrix}$, and defining the function $W_\text{source}(x^0,x^1)$ via
\begin{equation}
W_\text{source}(x^0,x^1)=\int_{0}^{x^0}\frac{a}{2}\sqrt{1-(\frac{dQ}{dx^0})^2} \chi_{[x^1-(x^0-t),x^1+(x^0-t)]}(Q(t))dt,
\end{equation}
we find that $Q(x^0)$, $p(x^0)$, and $U(x^0,x^1):=V(x^0,x^1)+U_\text{stat}(x^0,x^1)+W_\text{source}(x^0,x^1)$ are unique global solutions to the joint IVP given by equations (\ref{Joint 1}), (\ref{Joint 2}), and (\ref{Joint 3}). This concludes our proof of Theorem (\ref{Global}).
\end{porism}
\section{Asymptotic Stability of Scalar Particle}
The force law that describes the charge's dynamics given by (\ref{Force Law}) is very similar to the one given by the principle of stationary action (\ref{Lorentz force}), with the exception of a well-defined self-force. We derived this from the assumption that the total energy-momentum of our system is conserved, so it is not unexpected that the self-force is restorative, in magnitude proportional to the particle's velocity but with the opposite sign.  \href{https://sites.math.rutgers.edu/~laf230/Motion_of_Perturbed_Scalar_Particle_V2.gif}{This animation}\footnote{Link to animation: \href{https://sites.math.rutgers.edu/~laf230/Motion_of_Perturbed_Scalar_Particle_V2.gif}{https://sites.math.rutgers.edu/\~laf230/Motion\_of\_Perturbed\_Scalar\_Particle\_V2.gif}}
  of a scalar particle's sourced field showcases how as the particle's motion is disturbed so too is the field that it is sourcing. These disturbances in the sourced field carry energy, so to preserve the total energy of the system, the field disturbance's energy must have been sourced by the particle's kinetic energy. This is interpreted as an effective ``self-force" which was conjectured to sap away the particle's kinetic energy until all motion ceases.

We will show that, as expected, in the case of a scalar particle perturbed by some incoming radiation, the self-force causes the charge to asymptotically return towards rest.
\subsection{Asymptotic behavior of self-force}
Take $V_0$, $V_1$ sufficiently small, smooth, and compactly supported. 
Although the form of (\ref{Force Law}) was beneficial for proving well-posedness, we will find it best to convert the equation to one for $\frac{du^1}{dx^0}$ to better study the charge's motion.   Recalling that the dynamical mass of the charge is given by equation (\ref{dyn mass}), we obtain
\begin{equation}\label{product rule}
    m\frac{du^1}{d\tau}=au^1 \frac{d U(z(\tau))}{d \tau}-\frac{a^2}{2}u^1 + \partial_1 V.
\end{equation}
It is easy to verify from (\ref{Duhamel}) that $$\frac{dU_\text{source}(z(\tau))}{d\tau}=\frac{a}{2},$$ which implies via (\ref{U Decomposition}) a distinct cancellation in (\ref{product rule}) between the radiation-reaction self-force and the mass loss generated by $U_\text{source}$. 
\begin{remark}
    We note that a similar cancellation holds for point charges coupled to {\em massive} scalar fields in one space dimension \cite{FETZ}.
\end{remark}
Despite this cancellation, our final equation of motion still contains a restoring term from the mass contribution of the freely evolving stationary state solution $U_\text{stat}$.   We are left with
\begin{equation}\label{acceleration law}
    m \frac{du^1}{dx^0}=-\frac{a^2}{2}u^1+au^1 \frac{dV(x^0,z^1(x^0))}{dx^0}+\frac{a}{u^0}\partial_1V(x^0,z^1).
\end{equation}
The terms in (\ref{acceleration law}) can be split up into two categories, those which depend on the external radiation and those which do not. We write
\begin{equation}
    m\frac{d u^1}{dx^0}=F_\text{self}+ F_\text{ext},
\end{equation}
where 
\begin{equation}
F_{\text{self}} =-\frac{a^2}{2}u^1,
\end{equation}
and
\begin{equation}
F_{\text{ext}} =au^1\frac{d V(x^0,z^1(x^0))}{dx^0}+\frac{a}{u^0}\partial_1 V.
\end{equation}
Since the radiation was compactly supported and is propagating at the speed of light, we expect it to perturb the motion of the particle and then propagate away. Eventually we expect only the self-forces to remain, and it is clear that the particle will asymptotically tend towards rest as long as the dynamical mass does not grow too quickly. 

We begin by proving that the external radiation does eventually propagate away from the particle. 
\begin{lemma}\label{Displacement} Let $z^\mu$, $p^\mu$, and $U$ satisfy the joint IVP given by (\ref{Joint 1}), (\ref{Joint 2}), and (\ref{Joint 3}) with the same conditions as (\ref{Global}). Let $v=\frac{dz^1}{dx^0}$. Then
\begin{equation}
    \int_{0}^\infty 1-|v(t)| \ dt =\infty.
\end{equation}
\end{lemma}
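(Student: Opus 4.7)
The plan is to argue by contradiction: suppose that $\int_0^\infty(1-|v(t)|)\,dt = C_0 < \infty$ and derive an absurdity. As a preliminary I would show that $v$ is globally Lipschitz on $\mathbb{R}_{\geq 0}$. Using $v=u^1/u^0$ and $u^0=\sqrt{1+(u^1)^2}$ one computes $\dot v=(1-v^2)^{3/2}\dot u^1$; substituting (\ref{acceleration law}) and simplifying the combinations $u^1(1-v^2)^{3/2}=v(1-v^2)$ and $(1-v^2)^{3/2}/u^0=(1-v^2)^2$, both bounded on $|v|\leq 1$, produces
\begin{equation*}
|\dot v(x^0)|\;\leq\;\frac{C}{m(x^0)},\qquad C=C\bigl(a,\|\partial_0 V\|_\infty,\|\partial_1 V\|_\infty\bigr).
\end{equation*}
Combined with the uniform mass bound $m\geq m_V>0$ from Lemma~\ref{Scaled mass bound}, this gives $|\dot v|\leq L_v<\infty$ uniformly in $x^0$.

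Next I would upgrade the $L^1$ smallness of $1-|v|$ to the pointwise statement $|v(x^0)|\to 1$. If this failed, there would exist $\epsilon>0$ and $t_n\to\infty$ with $|v(t_n)|\leq 1-\epsilon$; the Lipschitz bound would then force $1-|v|\geq\epsilon/2$ on each interval $[t_n-\epsilon/(2L_v),\,t_n+\epsilon/(2L_v)]$, and passing to a disjoint subsequence would contribute at least $\epsilon^2/(2L_v)$ per term to $\int(1-|v|)\,dt$, contradicting $C_0<\infty$. Once $|v|\to 1$, continuity forces $v$ to have a definite sign for large $x^0$ (otherwise it would have to pass through $0$, contradicting $|v|>1/2$). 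Without loss of generality $v(x^0)\to +1$, whence $z^1(T)=\int_0^T v\,dt=T-\int_0^T(1-v)\,dt=T+O(1)$ as $T\to\infty$.

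To close, I would integrate (\ref{Joint 2}) from $0$ to $T$, using $p^1(0)=0$, to obtain
\begin{equation*}
p^1(T)\;=\;-\tfrac{a^2}{2}\,z^1(T)\;+\;a\int_0^T\sqrt{1-v^2(t)}\;\partial_1 V(t,z^1(t))\,dt.
\end{equation*}
Since $1-v^2\leq 2(1-|v|)$, one has $\int_0^T(1-v^2)\,dt\leq 2C_0$, and Cauchy--Schwarz then bounds the remainder integral by $|a|\,\|\partial_1 V\|_\infty\sqrt{2C_0 T}=O(\sqrt T)$. Therefore $p^1(T)=-\tfrac{a^2}{2}T+O(\sqrt T)\to-\infty$. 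On the other hand $v(T)\to 1$ forces $u^1(T)\to+\infty$, and the lower bound $m\geq m_V>0$ then gives $p^1(T)=m(T)u^1(T)\to+\infty$. This contradiction proves the lemma; the case $v\to -1$ is handled by the symmetry $x^1\mapsto -x^1$.

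The main difficulty is the second paragraph: converting the integral hypothesis into the pointwise limit $|v|\to 1$ (and hence sign stability) requires the uniform Lipschitz bound on $v$, which in turn relies crucially on the strictly positive lower bound $m_V$ on the dynamical mass guaranteed by the smallness assumption on $V_0,V_1$. Once that step is in place, the momentum identity closes the argument cleanly, because the secular term $-\tfrac{a^2}{2}z^1(T)\sim -\tfrac{a^2}{2}T$ easily dominates the $O(\sqrt T)$ radiation contribution.
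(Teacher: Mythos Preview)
Your proof is correct, but it takes a genuinely different route from the paper's.

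The paper also argues by contradiction, but instead of extracting pointwise information about $v$ it goes directly for a uniform upper bound $|v|\leq R/\sqrt{M^2+R^2}<1$. The key observation there is that under the hypothesis $\int_0^\infty(1-|v|)\,dt<\infty$ one has, by Cauchy--Schwarz,
\[
|aU_{\mathrm{source}}(x^0,z^1(x^0))|=\frac{a^2}{2}\int_0^{x^0}\sqrt{1-v^2}\,dt=O(\sqrt{x^0}),
\]
so the term $-aU_{\mathrm{stat}}(x^0,z^1(x^0))=\tfrac{a^2}{2}x^0$ dominates in the expression for $m$, giving $m(x^0)\geq Mx^0$ for large $x^0$. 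Coupled with the crude bound $|p^1(x^0)|\leq Rx^0$ coming straight from \eqref{Joint 2}, this forces $|v|=|p^1|/\sqrt{m^2+(p^1)^2}$ to be bounded away from $1$, contradicting integrability of $1-|v|$ in one stroke.

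By contrast, you never use the linear growth of $m$; you only use $m\geq m_V$, first to get the global Lipschitz bound on $v$ (hence $|v|\to 1$ and eventual sign-definiteness), and then to force $p^1=mu^1\to+\infty$, which collides with the integrated force law giving $p^1(T)=-\tfrac{a^2}{2}z^1(T)+O(\sqrt{T})\to-\infty$. Both arguments apply Cauchy--Schwarz to $\int\sqrt{1-v^2}$, but to different quantities: the paper to $U_{\mathrm{source}}$, you to the radiation term. The paper's argument is shorter and exploits the explicit structure of the dynamical mass; yours is longer but has the virtue of needing only the uniform lower bound $m\geq m_V$, not the full decomposition of $m$.
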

\begin{proof}
Suppose not. Then
\begin{equation}
    \int_{0}^\infty 1-|v(t)|\  dt= D <\infty.
\end{equation}
Recall that 
\begin{equation}
    v=\frac{p^1}{\sqrt{m^2 +(p^1)^2}}.
\end{equation}
We proceed by proving estimates about $p^1$ and $m$ which will contradict the rate of growth of $|v|$. The first will be the rough linear estimate for $p^1$. From (\ref{Force Law}) we have
\begin{equation}
\frac{d p^1}{d x^0}=-\frac{a^2}{2}v +a\sqrt{1-v^2}\partial_1 V.
\end{equation}
So $|p^1(x^0)|\leq Rx^0$ where $R\geq\frac{a^2}{2}+||a\partial_1 V_0||_{L^\infty}+||aV_1||_{L^\infty}$. To estimate $m(t)$, recall 
\begin{equation}
    m(x^0)= \tilde m-aV(x^0,z^1(x^0))+\frac{a^2}{2}x^0 -aU_\text{source}(x^0,z^1(x^0)).
\end{equation}
We bounded $m$ from below in the previous section by showing that $|aU_\text{source}(x^0,z^1(x^0))|\leq \frac{a^2}{2}x^0$. Using our contradiction hypothesis, we will now prove a stronger bound. For $x^0 \geq 0$ we have
\begin{equation}
\begin{split}
|aU_\text{source}(x^0,z^1(x^0))|&= \frac{a^2}{2}\int_0^{x^0}\sqrt{1-v(t)^2} dt
\\
&\leq \frac{a^2}{2}\bigg(\sqrt{\int_0^{x^0} 1-|v(t)| dt}\bigg)\cdot\bigg(\sqrt{\int_0^{x^0} 1+|v(t)| dt}\bigg) 
\\
&\leq \frac{a^2}{2} \sqrt{D}\sqrt{2x^0}.
\end{split}
\end{equation}
By the conditions on the radiation we see that the linear term will dominate $m(x^0)$. So, there exists a time $T>0$, and a slope $M>0$ such that for all $x^0>T$ we have that $m(x^0)\geq Mx^0$. Using our estimates for $p^1$ and $m$, we can conclude with a final estimate for $v$. For all $t>T$
\begin{equation}
    |v(t)|=\frac{|p^1|}{\sqrt{m^2 +(p^1)^2}}\leq \frac{Rt}{\sqrt{(Mt)^2 +(Rt)^2}}=\frac{R}{\sqrt{M^2+R^2}}<1.
\end{equation}
But this implies that $1-|v(t)|$ is bounded from below, so
\begin{equation}
    \int_{0}^\infty 1-|v(t)|\ dt =\infty.
\end{equation}
as desired. 
\end{proof}
\begin{corollary}\label{cor:Vanish}
Since $V_0$ and $V_1$ are compactly supported, it follows that all external force terms in $F_\text{ext}$ will vanish in finite time.
\end{corollary}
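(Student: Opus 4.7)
The plan is to trace the support of the freely propagating field $V$ along the particle's worldline and show that the trajectory exits that support in finite time. Since $V$ solves the homogeneous wave equation with compactly supported initial data, d'Alembert's formula gives
\begin{equation*}
V(x^0,x^1) = \tfrac{1}{2}\bigl[V_0(x^1-x^0) + V_0(x^1+x^0)\bigr] + \tfrac{1}{2}\int_{x^1-x^0}^{x^1+x^0} V_1(s)\,ds,
\end{equation*}
so if $\mathrm{supp}(V_0) \cup \mathrm{supp}(V_1) \subseteq [-R,R]$, both $\partial_0 V$ and $\partial_1 V$ vanish at $(x^0,x^1)$ unless either $|x^1-x^0| \leq R$ or $|x^1+x^0| \leq R$. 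Since $F_\text{ext} = au^1(\partial_0 V + v\,\partial_1 V) + (a/u^0)\partial_1 V$ involves only derivatives of $V$ evaluated at the particle, it suffices to find a finite $T$ beyond which both $|z^1(x^0)-x^0| > R$ and $|z^1(x^0)+x^0| > R$.

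Next I would write the two escape quantities as
\begin{equation*}
z^1(x^0) - x^0 = -\int_0^{x^0}\bigl(1-v(t)\bigr)\,dt, \qquad z^1(x^0) + x^0 = \int_0^{x^0}\bigl(1+v(t)\bigr)\,dt.
\end{equation*}
Because $|v|<1$, both integrands are nonnegative, so each expression is monotone in $x^0$. Lemma \ref{Displacement} gives $\int_0^\infty(1-|v(t)|)\,dt=\infty$, and combined with the elementary pointwise bounds $1-|v|\leq 1-v$ and $1-|v|\leq 1+v$ this forces
\begin{equation*}
\int_0^\infty \bigl(1-v(t)\bigr)\,dt = \int_0^\infty \bigl(1+v(t)\bigr)\,dt = \infty.
\end{equation*}
By monotonicity, each integral exceeds $R$ after some finite time $T_\pm$, and for $x^0 \geq T := \max(T_+,T_-)$ we conclude $F_\text{ext}(x^0)=0$.

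The step I would expect to require the most care is the passage from Lemma \ref{Displacement}, which controls only $\int(1-|v|)$, to the \emph{separate} divergence of $\int(1-v)$ and $\int(1+v)$. These two statements correspond, respectively, to the particle failing to asymptotically ride either the right-moving or the left-moving light ray emanating from the initial support of the radiation; if one of them were finite, the particle would track the wavefront and perpetually feel its pull. The inequality $1-|v|\leq \min(1-v,\,1+v)$ is precisely what bundles both escape statements into a single consequence of the already established Lemma \ref{Displacement}, making the rest of the argument nothing more than bookkeeping on supports.
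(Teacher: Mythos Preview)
Your proof is correct and follows the same approach as the paper: apply d'Alembert's formula to reduce the question to showing that $z^1(x^0)\pm x^0$ exit the support of $V_0,V_1$, then invoke Lemma~\ref{Displacement}. You are simply more explicit than the paper about the step linking $\int_0^\infty(1-|v|)\,dt=\infty$ to the separate divergence of $\int(1\mp v)$ via the pointwise bound $1-|v|\leq\min(1-v,1+v)$; the paper asserts this implication without spelling it out.
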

\begin{proof}
    It suffices to show that $\partial_1V$, $\partial_0V$ evaluated along the particle's world-line vanish after finite time. By d'Alambert's formula we have
    \begin{equation}\label{Vanish}
    \begin{split}
    \partial_1V(x^0,z^1(x^0))=\frac{1}{2}[V_0(z^1(x^0)+x^0)+V_0(z^1(x^0)-x^0)+V_1(z^1(x^0)+x^0)-V_1(z^1(x^0)-x^0)].
    \\
    \partial_0V(x^0,z^1(x^0))=\frac{1}{2}[V_0(z^1(x^0)+x^0)-V_0(z^1(x^0)-x^0)+V_1(z^1(x^0)+x^0)+V_1(z^1(x^0)-x^0)].
    \end{split}
    \end{equation}
    Lemma (\ref{Displacement}) implies that the quantities $z^1(x^0)\pm x^0$ grow arbitrarily large, so each term in (\ref{Vanish}) will vanish once $z^1(x^0)\pm x^0$ leaves the support of $V_0, V_1$.
\end{proof}
We conclude this section by stating and proving the second main result of this paper: that the perturbed charge asymptotically returns towards rest.
\begin{theorem}\label{Rest}
Let $z$, $U$ satisfy the joint IVP given by (\ref{Joint 1}) (\ref{Joint 2}), and (\ref{Joint 3}) for a single scalar particle with positive bare mass and non-zero real scalar charge, where $V_0$ and $V_1$ satisfy the same conditions as in Theorem (\ref{Global}).
Then for every $\epsilon>0$, there exists a $T_\epsilon>0$ such that $|u^1(x^0)|<\epsilon$ for all $x^0>T_\epsilon$.
\end{theorem}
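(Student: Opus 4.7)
\emph{Strategy.} By Corollary~\ref{cor:Vanish}, there exists a finite time $T>0$ past which $\partial_0 V$ and $\partial_1 V$ evaluated along the world-line vanish identically. This forces $F_\text{ext}\equiv 0$ on $[T,\infty)$, and it also pins $V(x^0,z^1(x^0))$ to a constant value $V_\infty$ for $x^0\ge T$ (indeed $\tfrac{d}{dx^0}V(x^0,z^1(x^0)) = \partial_0 V + v\,\partial_1 V = 0$ there). My plan is to use this to collapse (\ref{acceleration law}) to a pure damping ODE, integrate it explicitly, and conclude by showing the damping integral diverges.

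\emph{Reduced ODE.} For $x^0\ge T$, equation (\ref{acceleration law}) reduces to
\[
m(x^0)\,\frac{du^1}{dx^0} \;=\; -\tfrac{a^2}{2}\,u^1,
\]
which integrates to
\[
u^1(x^0) \;=\; u^1(T)\,\exp\!\left(-\int_T^{x^0}\frac{a^2/2}{m(s)}\,ds\right).
\]
Since the well-posedness theorem (and Lemma~\ref{Scaled mass bound}) guarantees $m(s)\ge m_V>0$ for all $s$, the exponential is well defined and the whole matter becomes the divergence of $\int_T^\infty ds/m(s)$.

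\emph{Linear upper bound for $m$.} Using the decomposition $U = V + U_\text{stat} + U_\text{source}$, the identity $U_\text{stat}(s,z^1(s)) = -\tfrac{a}{2}s$ (valid under the subluminal bound $|z^1(s)|\le s$), and $\tfrac{d}{d\tau}U_\text{source}(z(\tau)) = \tfrac{a}{2}$ from (\ref{Duhamel}), I obtain, for $s\ge T$,
\[
m(s) \;=\; \tilde m - aV_\infty + \tfrac{a^2}{2}\int_0^s\!\bigl(1-\sqrt{1-v(t)^2}\bigr)\,dt \;\le\; \tilde m - aV_\infty + \tfrac{a^2}{2}\,s,
\]
with $\tilde m-aV_\infty\ge m_V>0$ by the smallness hypothesis on $V_0,V_1$ (d'Alembert gives $V_\infty = \tfrac12\int V_1$, so $|aV_\infty|\le\tfrac{|a|}{2}\|V_1\|_{L^1}$). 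Plugging this linear bound into the decay formula,
\[
\int_T^{x^0}\frac{a^2/2}{m(s)}\,ds \;\ge\; \log\frac{\tilde m - aV_\infty + (a^2/2)x^0}{\tilde m - aV_\infty + (a^2/2)T} \;\longrightarrow\;\infty,
\]
so $u^1(x^0)\to 0$, as required.

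\emph{Main point.} There is no serious analytic obstacle: once Corollary~\ref{cor:Vanish} and the explicit form of $m$ are available, the argument reduces to a one-line ODE together with a logarithmic integral divergence. The key structural fact is that the cancellation highlighted in Remark~\ref{Cancellation} between the mass loss from $U_\text{source}$ and the radiation-reaction self-force only partially offsets the linear growth of $m$ coming from $U_\text{stat}$: it may slow that growth but cannot eliminate it, and $m(s)$ remains at most linear --- precisely the borderline rate at which $\int ds/m$ still diverges and pins $u^1$ to zero.
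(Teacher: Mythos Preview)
Your proof is correct and follows essentially the same route as the paper: invoke Corollary~\ref{cor:Vanish} to reduce to the damping ODE $m\,\frac{du^1}{dx^0}=-\frac{a^2}{2}u^1$, integrate, and use a linear upper bound on $m$ to force $\int_T^\infty ds/m(s)=\infty$. Your treatment is in fact slightly more explicit than the paper's---you compute $V_\infty=\tfrac12\int V_1$ and exhibit the exact affine bound $m(s)\le \tilde m-aV_\infty+\tfrac{a^2}{2}s$, whereas the paper simply asserts $m<Ax^0$ for some $A$---but the argument is structurally identical.
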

\begin{proof} By Corollary~\ref{cor:Vanish}, there exists a time $T$ such that for all $x^0>T$, $u^1(x^0)$ satisfies
\begin{equation}\label{Self ODE}
    \frac{du^1}{dx^0}=-\frac{a^2u^1}{2m}.
\end{equation}
If at some time $T'>T$ we have that $u^1=0$, then it will remain 0 for all time afterwards and the theorem holds. Suppose $|u^1|>0$ for all time after $T$. Dividing both sides of (\ref{Self ODE}) by $u^1$ and integrating returns
\begin{equation}
    \ln(|u^1(x^0)|)=\ln(|u^1(T)|)-\frac{a^2}{2}\int_T^{x^0}\frac{1}{m}dt.
\end{equation}
Recall that $m$ is strictly positive and increases at most linearly, thus there exists an $A$ such that for all $x^0>T>0$, $m<Ax^0$. It follows that 
\begin{equation}
    \ln(|\frac{u^1(x^0)}{u^1(T)}|)<-\frac{a^2}{2A}(\ln(\frac{x^0}{T})).
\end{equation}
So 
\begin{equation}
    |u^1(x^0)|< |u^1(T)|\big(\frac{x^0}{T}\big)^{-\frac{a^2}{2A}}.
\end{equation}
Since $\frac{a^2}{2A}>0$, we see that $u^1$ decays to zero asymptotically.
\end{proof}
\section{Summary and Outlook}
In this paper we studied the joint evolution of a point particle coupled to a scalar field in $1+1$ dimensions. A rigorous derivation for the force on a charged particle was given from the assumption of conservation of energy-momentum, and from this we calculated the self-force which result from back-reaction. The scalar self-force that charged particles experience is restorative, proportional to negative velocity, and causes the charge to asymptotically return towards rest after being perturbed by radiation. Written explicitly:
\begin{equation}
    m\frac{d u^1}{dx^0}=F_\text{self}+ F_\text{ext},
\end{equation}
where 
\begin{equation}
F_{\text{self}} =-\frac{a^2}{2}u^1,
\end{equation}
and $a$ denotes the scalar charge.
We also proved the well-posedness for the joint evolution problem.
\par 
Although our study was meant to act as a toy model, it has produced some fruitful results. They indicate that for universes in which fields coupled to point particles evolve with a certain level of regularity, the joint evolution problem of fields and particles is well-posed under the condition that their dynamics preserve the total energy-momentum. The Maxwell-Lorentz theory of point charges in $3+1$ dimensions does not give rise to the level of regularity sufficient for such joint evolutions to exist. BLTP, a higher-order modification to Maxwell-Lorentz theory has found success in $3+1$ dimensions because of the regularity of its field's evolution. However, the complexity of the BLTP electromagnetic self-force in $3+1$ dimensions does not admit an easy way to   investigate whether or not   the process of back-reaction   stabilizes   a perturbed particle   in that case  (see \cite{CarKie24} for an interesting preliminary result in this direction.) Working in $1+1$ dimensions has provided us with a much simpler set of dynamics, and we were able to prove   asymptotic stability   in the case of a scalar particle. The mathematical simplicity afforded to us in this lower-dimensional setting makes our results approachable and easily understood,  while we hope   many of the physically intuitive arguments still carry over to their higher-dimensional counterparts.

There are multiple avenues that we will take to further investigate the process of back-reaction,  still within this simplified framework:  In ongoing work 
 \cite{FETZ}   we generalize our results to the case of {\em massive} scalar fields, which we believe will provide researchers with a clearer picture of back-reaction in higher-order modifications of $3+1$ dimensional field theories.    We would like to study a scattering problem for this model, one analogous to \cite{Hoang}, in which the initial data is prescribed at the infinite past. Another possibility is to study the general initial value problem with arbitrary data for the field that is compatible with a point charge (although in that case one might expect additional singularities to form in the field on the light cone of the initial data position, as was shown in \cite{DeHa} for Maxwell-Lorentz electrodynamics.)   We also plan to pursue a project investigating {\em gravitational} back-reaction, to see whether a gravitational self-force arises when deriving the force law from the principle of energy-momentum conservation.
\section{Acknowledgment} We are grateful the anonymous referees for their sharp eyes and insightful remarks.  We thank Aditya Agashe and Ethan Lee for their assistance in drawing Figure \ref{fig:region}.
\begin{appendix}
\section{Well-Posedness of Integral Equations}
In this section we will provide brief proofs of well-known results regarding the well-posedness of integral equations. We will do this with the help of Banach fixed point theorem.

We first recall a standard result:
\begin{lemma}Let $X$ be a metric space, and let $C_{\gamma}^0(\mathbb{R}_{\geq 0},X)$ be the normed set of continuous maps from $\mathbb{R}_{\geq 0}\rightarrow X$ such that for all $x(\cdot) \in C_{\gamma}^0(\mathbb{R}_{\geq 0},X)$ we have
\begin{equation}
    ||x(\cdot)||_\gamma=\sup_{t\geq 0}e^{-\gamma t}||x(t)||_X<\infty.
\end{equation}
Then $C_{\gamma}^0(\mathbb{R}_{\geq 0},X)$ is a Banach space.
\end{lemma}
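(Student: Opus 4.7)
The plan is to verify the two ingredients of a Banach space: that $C_{\gamma}^0(\mathbb{R}_{\geq 0},X)$ is a normed vector space, and that it is complete with respect to $||\cdot||_\gamma$. The statement presumably intends $X$ to be a Banach space (the expression $||x(t)||_X$ is written as a norm, and completeness of the ambient space is plainly needed for the conclusion); once this is granted, the vector-space structure and the norm axioms for $||\cdot||_\gamma$ transfer routinely from those on $X$, because the weight $e^{-\gamma t}$ is strictly positive and bounded by $1$. The substantive step is therefore completeness.

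To prove completeness, I would take an arbitrary Cauchy sequence $\{x_n\}$ in $(C^0_\gamma,||\cdot||_\gamma)$. For each fixed $t\geq 0$, the elementary inequality $||x_n(t)-x_m(t)||_X \leq e^{\gamma t}||x_n-x_m||_\gamma$ shows that $\{x_n(t)\}$ is Cauchy in $X$, so completeness of $X$ yields a pointwise limit $x(t):=\lim_{n\to\infty}x_n(t)$. To upgrade this to convergence in the weighted norm, fix $\epsilon>0$ and choose $N$ with $||x_n-x_m||_\gamma<\epsilon$ for all $n,m\geq N$; then $e^{-\gamma t}||x_n(t)-x_m(t)||_X<\epsilon$ holds uniformly in $t$, and sending $m\to\infty$ at each $t$ gives $e^{-\gamma t}||x_n(t)-x(t)||_X \leq \epsilon$, whence $||x_n-x||_\gamma\leq\epsilon$ for all $n\geq N$.

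It remains to verify that the candidate limit $x$ actually lies in $C^0_\gamma$. Finiteness of $||x||_\gamma$ is immediate from the triangle inequality $||x||_\gamma\leq||x-x_N||_\gamma+||x_N||_\gamma$. Continuity of $x$ is the step that requires a little care: on any compact interval $[0,T]$ one has $||x_n(t)-x(t)||_X \leq e^{\gamma T}||x_n-x||_\gamma$, so $x_n\to x$ uniformly on $[0,T]$ in the $X$-norm; since each $x_n$ is continuous, $x$ is a uniform limit of continuous functions on every compact subset of $\mathbb{R}_{\geq 0}$, hence continuous on $\mathbb{R}_{\geq 0}$. The only mild obstacle is the bookkeeping between the weighted norm $||\cdot||_\gamma$ and the pointwise norm $||\cdot||_X$, which must be navigated in both directions — from global Cauchyness to pointwise Cauchyness to extract the candidate limit, and from global closeness to uniform closeness on compact intervals to recover continuity. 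No idea beyond the standard fact that $C_b(\mathbb{R}_{\geq 0},X)$ with a weighted sup norm is Banach whenever $X$ is, is required.
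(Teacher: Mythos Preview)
Your proof is correct and complete. The paper itself does not prove this lemma at all; it is introduced with the phrase ``We first recall a standard result'' and stated without proof. Your observation that $X$ must in fact be a Banach space (not merely a metric space, as the hypothesis literally reads) is well taken: the expression $||x(t)||_X$ and the conclusion both require $X$ to carry a complete norm, and the paper's later applications use $X=\mathbb{R}$ or $\mathbb{R}^n$, so this is clearly the intended reading.
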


Let $f:X\times \mathbb{R}_{\geq 0} \rightarrow X$. Define 
\begin{equation}
     F_t (q(\cdot)|x_0):=x_0+\int_{0}^t f(q(\tau),\tau)d\tau.
\end{equation}
\begin{theorem}\label{Unique Global}
  Let $Y_\gamma$ be a closed subset of $C_{\gamma}^0(\mathbb{R}_{\geq 0},X)$ such that $F_{*}(x(\cdot)|x_0):Y_\gamma \rightarrow Y_\gamma $ is a well defined mapping. Suppose there exists an $L<\gamma$ such that   for all $q_1(\cdot),q_2(\cdot) \in Y_\gamma$
\begin{equation}
    ||f(q_2(\tau),\tau)-f(q_1(\tau),\tau)||_X \leq L ||q_2(\tau)-q_1(\tau)||_X.
\end{equation}
  Then $F_{*}(x(\cdot)|x_0)$ restricted to $Y_\gamma$ is a contraction mapping on a complete metric space. Thus, $Y_\gamma$ contains a unique fixed point $q(\cdot)$ which satisfies
\begin{equation}
    q(t)=F_{t}(q(\cdot)|x_0)=x_0 +\int_0^t f(q(\tau),\tau)d\tau.
\end{equation}
for all $t\geq 0$. 
\end{theorem}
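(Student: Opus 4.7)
The plan is to invoke the Banach fixed point theorem on the closed subset $Y_\gamma \subset C_\gamma^0(\mathbb{R}_{\geq 0}, X)$. Since $C_\gamma^0$ is a Banach space by the preceding lemma, and $Y_\gamma$ is closed in it, $Y_\gamma$ is a complete metric space under the metric induced by $\|\cdot\|_\gamma$. The hypothesis already tells us that $F_*(\,\cdot\,|x_0)$ maps $Y_\gamma$ into itself, so the only thing left to establish is that this map is a strict contraction with respect to $\|\cdot\|_\gamma$.

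Next I would carry out the contraction estimate directly. For arbitrary $q_1, q_2 \in Y_\gamma$ and $t \geq 0$, the definition of $F_t$ and the triangle inequality for the Bochner-type integral yield
\begin{equation*}
\|F_t(q_2|x_0) - F_t(q_1|x_0)\|_X \leq \int_0^t \|f(q_2(\tau),\tau) - f(q_1(\tau),\tau)\|_X\, d\tau.
\end{equation*}
Using the Lipschitz hypothesis pointwise in $\tau$ and inserting the factor $e^{\gamma\tau} e^{-\gamma\tau}$, one bounds the integrand by $L e^{\gamma\tau}\|q_2-q_1\|_\gamma$, so the integral is at most $L\|q_2-q_1\|_\gamma (e^{\gamma t}-1)/\gamma \leq (L/\gamma) e^{\gamma t}\|q_2-q_1\|_\gamma$. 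Multiplying by $e^{-\gamma t}$ and taking the supremum over $t\geq 0$ produces
\begin{equation*}
\|F_*(q_2|x_0) - F_*(q_1|x_0)\|_\gamma \leq \tfrac{L}{\gamma}\, \|q_2-q_1\|_\gamma,
\end{equation*}
and the contraction constant $L/\gamma$ is strictly less than $1$ by the assumption $L<\gamma$.

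With this contraction estimate in hand, Banach's fixed point theorem applied to the restriction of $F_*(\,\cdot\,|x_0)$ to the complete metric space $Y_\gamma$ gives a unique $q(\cdot)\in Y_\gamma$ with $q = F_*(q|x_0)$, which is exactly the integral equation claimed.

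The weighted norm $\|\cdot\|_\gamma$ is the crucial device here: with the ordinary sup-norm one could only obtain contraction on a short time interval. The one subtlety worth flagging (rather than an obstacle proper) is to confirm that the displayed estimate is justified in an abstract metric space $X$ that is not assumed to be a vector space; the quantities $\|f(q_2(\tau),\tau)-f(q_1(\tau),\tau)\|_X$ and $\|q_2(\tau)-q_1(\tau)\|_X$ should be read as distances, and the integral $F_t$ must be understood in a setting (e.g.\ Banach $X$) where the Bochner integral makes sense. In the application to Theorem~\ref{Global Int} we have $X = \mathbb{R}^n$, so no such issue arises.
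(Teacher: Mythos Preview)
Your proof is correct and follows essentially the same route as the paper: both establish the contraction estimate $\|F_*(q_2|x_0)-F_*(q_1|x_0)\|_\gamma \leq (L/\gamma)\|q_2-q_1\|_\gamma$ by inserting the exponential weight inside the time integral, then invoke Banach's fixed point theorem on the closed (hence complete) subset $Y_\gamma$. Your closing remark about needing $X$ to be a Banach space for the integral to make sense is a fair observation that the paper leaves implicit.
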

\begin{proof}
We wish to show that for all $x_1(\cdot),x_2(\cdot) \in Y_\gamma$,
\begin{equation}
    ||F_{*}(x_2(\cdot)|x_0)-F_{*}(x_1(\cdot)|x_0)||_{\gamma}\leq \frac{L}{\gamma} ||x_2(\cdot)-x_1(\cdot)||_\gamma.
\end{equation}
By definition
\begin{equation}
\begin{split}
    ||F_{*}(x_2(\cdot)|x_0)-F_{*}(x_1(\cdot)|x_0)||_{\gamma}&=\sup_{t\geq 0}e^{-\gamma t}||\int_{0}^t f(x_2(\tau),\tau)-f(x_1(\tau),\tau)d\tau||_X
    \\
    &\leq \sup_{t\geq 0}e^{-\gamma t}\int_{0}^t ||f(x_2(\tau),\tau)-f(x_1(\tau),\tau)||_X d\tau
    \\
    &\leq \sup_{t\geq 0}e^{-\gamma t}\int_{0}^t L ||x_2(\tau)-x_1(\tau)||_Xd\tau
    \\
    &=\sup_{t\geq 0}\int_{0}^t e^{-\gamma(t-\tau)}e^{-\gamma \tau}L||x_2(\tau)-x_1(\tau)||_Xd\tau
    \\
    &\leq \sup_{t\geq 0}\int_{0}^t e^{-\gamma(t-\tau)}L||x_2(\cdot)-x_1(\cdot)||_\gamma d\tau
    \\
    &\leq \frac{L}{\gamma}||x_2(\cdot)-x_1(\cdot)||_{\gamma}.
\end{split}
\end{equation}
\end{proof}
\par
In cases where $f:\mathbb{R}\times \mathbb{R}_{\geq 0}\rightarrow \mathbb{R}$ is bounded it is useful to consider $L_{k,\gamma}(\mathbb{R}_{\geq 0})\subset C_{\gamma}^0(\mathbb{R}_{\geq 0},\mathbb{R})$, the normed set of all Lipschitz continuous functions $l(\cdot)$ with Lipschitz constant $k$ and $l(0)=0$.
\begin{theorem}\label{Complete subset}For all $\gamma>0$,
$L_{k,\gamma}(\mathbb{R}_{\geq 0})$ is a closed subset of $C_{\gamma}^0(\mathbb{R}_{\geq 0},\mathbb{R})$, and therefore a complete metric space.
\end{theorem}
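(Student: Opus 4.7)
The plan is to prove closedness of $L_{k,\gamma}(\mathbb{R}_{\geq 0})$ inside the Banach space $C_\gamma^0(\mathbb{R}_{\geq 0}, \mathbb{R})$; completeness of $L_{k,\gamma}$ then follows immediately, since a closed subset of a complete metric space inherits completeness from the ambient metric. Thus the entire content is to show that $\|\cdot\|_\gamma$-limits of sequences in $L_{k,\gamma}$ remain in $L_{k,\gamma}$.

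To that end, I would take an arbitrary sequence $\{l_n\} \subset L_{k,\gamma}$ with $\|l_n - l\|_\gamma \to 0$ for some $l \in C_\gamma^0(\mathbb{R}_{\geq 0}, \mathbb{R})$, and verify the two algebraic conditions defining $L_{k,\gamma}$: the basepoint condition $l(0) = 0$ and the Lipschitz bound $|l(s) - l(t)| \leq k|s-t|$ for all $s,t \geq 0$. The bridge is the observation that $\|\cdot\|_\gamma$-convergence implies pointwise convergence: for each fixed $t \geq 0$ the weight $e^{-\gamma t}$ is a strictly positive constant, so $e^{-\gamma t}|l_n(t) - l(t)| \leq \|l_n - l\|_\gamma \to 0$ forces $l_n(t) \to l(t)$.

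With pointwise convergence in hand, both conditions pass to the limit trivially. Since $l_n(0) = 0$ for every $n$, taking $n\to\infty$ at $t=0$ gives $l(0) = 0$. For the Lipschitz estimate, fix $s, t \geq 0$ and apply the triangle inequality
\begin{equation*}
|l(s) - l(t)| \leq |l(s) - l_n(s)| + |l_n(s) - l_n(t)| + |l_n(t) - l(t)|,
\end{equation*}
bounding the middle term by $k|s-t|$ and letting $n \to \infty$ using pointwise convergence at $s$ and at $t$ to obtain $|l(s) - l(t)| \leq k|s-t|$. Hence $l \in L_{k,\gamma}$, proving closedness.

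There is no real obstacle here; the statement is essentially bookkeeping. The only point worth flagging is not to conflate $\|\cdot\|_\gamma$-convergence with uniform or unweighted-sup convergence on $\mathbb{R}_{\geq 0}$ — that would be false in general, since the weight $e^{-\gamma t}$ suppresses behavior at infinity. Fortunately, mere pointwise convergence (at a single $t$ at a time) is all that is required to transfer the basepoint condition and the Lipschitz inequality.
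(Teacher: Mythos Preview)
Your proof is correct and follows essentially the same approach as the paper's: both establish closedness by showing that the Lipschitz condition passes to the limit via pointwise convergence, which is all that $\|\cdot\|_\gamma$-convergence guarantees. Your version is in fact slightly more complete, since you explicitly verify the basepoint condition $l(0)=0$ (which the paper's proof omits) and spell out why weighted-sup convergence implies pointwise convergence.
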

\begin{proof} Let $\{l_n(\cdot)\}$ be a Cauchy sequence in $L_{k,\gamma}(\mathbb{R}_{\geq 0})$. This has a limit $l(\cdot)$ in $C_{\gamma}^0(\mathbb{R}_{\geq 0},\mathbb{R})$. Also, we have that for every $t, t' \geq 0$ 
\begin{equation}
       k|t-t'| -|l_n(t)-l_n(t')|\geq 0.
\end{equation}
The convergence is pointwise, so taking the limit of both sides as $n$ goes to infinity yields
\begin{equation}
    k|t-t'| -|l(t)-l(t')|\geq 0.
\end{equation}
Taking the infimum over $t,t'$ yields that $l(\cdot)\in L_{k,\gamma}(\mathbb{R}_{\geq 0})$ as desired.
\end{proof}
\begin{corollary}\label{Complete Subset cor}For all $\gamma>0$, $\vec{k}\in \mathbb{R}^n$, it follows that $L_{\vec{k},\gamma}(\mathbb{R}_{\geq 0}):=\prod_{i=1}^n L_{k_i,\gamma}(\mathbb{R}_{\geq 0})$ is a closed subset of $C_\gamma^0(\mathbb{R}_{\geq 0},\mathbb{R}^n)$, and therefore a complete metric space.
\end{corollary}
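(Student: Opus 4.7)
The plan is to reduce the multi-component statement to the single-component result of Theorem~\ref{Complete subset} by identifying $C_\gamma^0(\mathbb{R}_{\geq 0},\mathbb{R}^n)$ with the finite Cartesian product $\prod_{i=1}^n C_\gamma^0(\mathbb{R}_{\geq 0},\mathbb{R})$ as a normed space, and then invoking the two elementary facts that (a) a finite product of closed subsets is closed in the product space, and (b) any closed subset of a complete metric space is itself complete.

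The first step is to make the identification precise. Fix a norm on $\mathbb{R}^n$ (say the Euclidean one); since all norms on $\mathbb{R}^n$ are equivalent, it suffices to show that $\|q(\cdot)\|_\gamma^{\mathbb{R}^n} := \sup_{t\ge 0} e^{-\gamma t}\|q(t)\|_{\mathbb{R}^n}$ is equivalent to the product norm $\max_{1\le i\le n}\|q_i(\cdot)\|_\gamma$, where $q_i$ are the components of $q$. This is immediate from the coordinate estimate $\max_i|q_i(t)| \le \|q(t)\|_{\mathbb{R}^n} \le \sqrt{n}\,\max_i|q_i(t)|$ applied pointwise in $t$ and then taking the supremum. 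Consequently, a sequence $\{q^{(k)}(\cdot)\}$ converges in $C_\gamma^0(\mathbb{R}_{\geq 0},\mathbb{R}^n)$ if and only if each component sequence $\{q^{(k)}_i(\cdot)\}$ converges in $C_\gamma^0(\mathbb{R}_{\geq 0},\mathbb{R})$.

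The second step is to verify closedness. Let $\{q^{(k)}(\cdot)\} \subset L_{\vec{k},\gamma}(\mathbb{R}_{\geq 0})$ with $q^{(k)}(\cdot)\to q(\cdot)$ in $C_\gamma^0(\mathbb{R}_{\geq 0},\mathbb{R}^n)$. By the equivalence above, each component $q^{(k)}_i(\cdot) \to q_i(\cdot)$ in $C_\gamma^0(\mathbb{R}_{\geq 0},\mathbb{R})$. Since $q^{(k)}_i(\cdot)\in L_{k_i,\gamma}(\mathbb{R}_{\geq 0})$ for every $k$, and Theorem~\ref{Complete subset} states that $L_{k_i,\gamma}(\mathbb{R}_{\geq 0})$ is closed in $C_\gamma^0(\mathbb{R}_{\geq 0},\mathbb{R})$, we conclude $q_i(\cdot) \in L_{k_i,\gamma}(\mathbb{R}_{\geq 0})$ for each $i$. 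Hence $q(\cdot)\in \prod_{i=1}^n L_{k_i,\gamma}(\mathbb{R}_{\geq 0}) = L_{\vec{k},\gamma}(\mathbb{R}_{\geq 0})$, proving closedness.

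The third step is to conclude completeness. The ambient space $C_\gamma^0(\mathbb{R}_{\geq 0},\mathbb{R}^n)$ is a Banach space by the vector-valued analogue of the lemma preceding Theorem~\ref{Unique Global} (the proof is verbatim, replacing $\mathbb{R}$ by $\mathbb{R}^n$; it uses only that $\mathbb{R}^n$ is complete). A closed subset of a complete metric space is complete, so $L_{\vec{k},\gamma}(\mathbb{R}_{\geq 0})$ equipped with the metric induced by $\|\cdot\|_\gamma^{\mathbb{R}^n}$ is a complete metric space. No serious obstacle is anticipated; the only subtlety is being explicit about the norm identification in the first step, as the paper's definition of $\|\cdot\|_\gamma$ in the vector-valued case is implicit through the product structure.
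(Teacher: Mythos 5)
Your proposal is correct and follows essentially the same route as the paper, which states the corollary as an immediate componentwise consequence of Theorem~\ref{Complete subset} (a finite product of closed sets is closed, and a closed subset of a complete space is complete). Your explicit verification of the equivalence between the $\mathbb{R}^n$-valued weighted norm and the product of the scalar ones is a reasonable detail to spell out, but it does not change the argument.
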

\end{appendix}
\bibliographystyle{plain}

\end{document}